\newtheorem{theorem}{Theorem}[section]
\newtheorem{cor}[theorem]{Corollary}
\newtheorem{lemma}[theorem]{Lemma}
\newtheorem{prop}[theorem]{Proposition}
\theoremstyle{definition}
\newtheorem{definition}[theorem]{Definition}
\theoremstyle{plain}
\numberwithin{equation}{section}
\newtheorem*{ack}{Acknowledgement}
\newtheorem{corollary}[theorem]{Corollary}
\def \Z{\mathbb Z}
\newcommand{\secref}[1]{Section~\ref{#1}}
\newcommand{\thmref}[1]{Theorem~\ref{#1}}
\newcommand{\lemref}[1]{Lemma~\ref{#1}}
\newcommand{\propref}[1]{Proposition~\ref{#1}}
\newcommand{\corref}[1]{Corollary~\ref{#1}}
\numberwithin{equation}{subsection}
\begin{document}
\title[Twin Groups and Grothendieck's Cartographical groups]{Commutator Subgroups of Twin Groups and Grothendieck's Cartographical groups}
\author[Soumya Dey]{Soumya Dey}
\author[Krishnendu Gongopadhyay]{Krishnendu Gongopadhyay}
\address{Indian Institute of Science Education and Research (IISER) Mohali, Sector 81,  SAS Nagar, P. O. 
Manauli, Punjab 140306, India.}
\email{soumya.sxccal@gmail.com} 
\address{Indian Institute of Science Education and Research (IISER) Mohali, Sector 81,  SAS Nagar, P. O. 
Manauli, Punjab 140306, India.}
\email{krishnendu@iisermohali.ac.in, krishnendug@gmail.com} 
\subjclass[2010]{Primary 20F36; Secondary 20F12, 20F05, 11G32, 05E15    }
\keywords{doodle, twin group, cartographical group, combinatorial maps, commutator subgroup}

\date{\today}

\begin{abstract}
Let $TW_n$ be the twin group on $n$ arcs, $n \geq 2$. The group $TW_{m+2}$ is isomorphic to Grothendieck's $m$-dimensional cartographical group $\mathcal C_m$, $m \geq 1$. In this paper we give a finite presentation for the commutator subgroup $TW_{m+2}'$, and prove that $TW_{m+2}'$ has rank $2m-1$.  We derive that  $TW_{m+2}'$ is free if and only if $m \leq 3$. From this it follows that $TW_{m+2}$ is word-hyperbolic and does not contain a surface group if and only if $m \leq 3$.  It also follows that the automorphism group of $TW_{m+2}$ is finitely presented for $m \leq 3$.  
\end{abstract}
\maketitle

\section{Introduction}
Let $n \geq 2$. The  \emph{twin group on $n$ arcs},  denoted by $TW_n$, is generated by a set of $(n-1)$ generators: $\{ \tau_i \ | \ i=1, 2, \ldots, n-1\}$ satisfying the following set of defining relations:
\begin{equation}\tau_i^2=1, \ \hbox{ for all } i, \end{equation}
\begin{equation}\tau_i \tau_j=\tau_j \tau_i, \ \hbox{ if } |i-j|>1.\end{equation}
 
\medskip The role of this group in the theory of `doodles' on a closed oriented surface is similar to the role of Artin's braid groups in the theory of knots and links. In \cite{khov}, Khovanov investigated the doodle groups, and introduced the twin group of $n$ arcs. Khovanov proved that the closure of a twin is a doodle on the ($2$ dimensional) sphere, see \cite{khov} for details. 

\medskip The above group presentation is also of importance in the Grothendieck's theory of `dessins d'enfant'. For $m \geq 1$, the group $TW_{m+2}$ is isomorphic to Grothendieck's $m$-dimensional cartographical group $\mathcal C_m$. Voevodsky used  this group in \cite{vv} as a generalization of the $2$-dimensional cartographical group. It is a standard fact in this theory that  the conjugacy classes of  the $2$-dimensional cartographical group $\mathcal C_2$  can be identified with combinatorial maps on connected surfaces, not necessarily orientable or without boundary, see \cite{js} for more details.  In \cite{vince1, vince2}, Vince looked at the group $\mathcal C_m$ as `combinatorial maps' and investigated certain topological and combinatorial structures associated to this group.

 \medskip  The commutator subgroup or derived subgroup $G'$ of a group $G$ is generated by the elements of the form $x^{-1} y^{-1} x y$. This subgroup is one measure to know how far $G$ is from being abelian. This is the smallest normal subgroup that abelianize $G$, i.e. the quotient $G/G'$ is abelian. The quotient $G/G'$  also gives the first homology group of $G$. 

\medskip The commutator subgroup $B_n'$ of Artin's braid group on $n$ strands $B_n$ is well-studied.  Gorin and Lin \cite{gl} obtained a finite presentation for  $B_n'$. Several authors have investigated commutators of  larger class of spherical Artin groups, e.g.  \cite{zinde}, \cite{mr},  \cite{orevkov}. 

\medskip In this paper, we ask for the commutator subgroup of the group $TW_n$. Note that $TW_2$ is the cyclic group of order two, and hence the commutator subgroup $TW_2'$ is trivial. However, for $n \geq 3$, the structure of the commutator subgroup $TW_n'$ is non-trivial. It is easy to see that $TW_n'$ is a finite index subgroup of the finitely presented group $TW_n$, hence it is clear that $TW_n'$ is finitely presented. In general, it is a difficult problem to obtain a finite presentation for a finitely presented group, and sometimes it is algorithmically impossible as well, see \cite{bw}. So, knowing that $TW_n'$ is finitely presented is not enough to have a clear understanding about the structure of the group. In this paper, we obtain an explicit finite presentation for $TW_n'$. Since $TW_{m+2}$ is isomorphic to $\mathcal C_{m}$ for all $m \geq 1$, this also gives finite presentation for the group $\mathcal C_m'$.  
\begin{theorem}\label{mainth}
 For $m \ge 1$, $TW_{m+2}'$ has the following presentation:\\
 
 Generators: $ \ \ \ \ \beta_{p}(j), \ \ \ \ \  0 \le p < j \le m. $\\
 
 Defining relations: \ \ \ For all $~ l \ge 3,~~ 1 \le k \le j,~~ j+2 \le t \le m,$
 \begin{equation*}
 \beta_{j-k}(j) ~ \beta_{t-(j+l)}(t) =  \beta_{t-(j+l)}(t) ~ \beta_{j-k}(j), 
 \end{equation*}
 \begin{equation*}
 \beta_{t-k}(t) = \beta_{j-k}(j)^{-1} ~ \beta_{t-(j+1)}(t) ~ \beta_{j-k}(j).
 \end{equation*}
\end{theorem}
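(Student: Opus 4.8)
The plan is to derive the presentation by running the Reidemeister--Schreier rewriting process on the exact sequence $1\to TW_{m+2}'\to TW_{m+2}\to(\mathbb Z/2)^{m+1}\to1$ and then simplifying the output with Tietze transformations. The first ingredient is the abelianization: the defining relators $\tau_i^2$ and $[\tau_i,\tau_j]$ (for $|i-j|>1$) are all killed in the abelianization while the images of the $\tau_i$ form a $\mathbb Z/2$-basis, so $TW_{m+2}^{\mathrm{ab}}\cong(\mathbb Z/2)^{m+1}$ and $TW_{m+2}'$ has index $2^{m+1}$. For the Schreier transversal I would take the set $T$ of ``increasing words'' $\tau_{i_1}\tau_{i_2}\cdots\tau_{i_k}$ with $1\le i_1<\cdots<i_k\le m+1$, together with the empty word; this set is prefix-closed, has exactly $2^{m+1}$ elements, and meets each coset once (distinct subsets give distinct elements of $(\mathbb Z/2)^{m+1}$), so it is a legitimate Schreier transversal.

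Next I would write out the Schreier generators $\gamma_{t,i}=t\,\tau_i\,\overline{t\tau_i}^{\,-1}$ ($t\in T$, $1\le i\le m+1$) and determine which are trivial. Rewriting $t\tau_i$ into transversal form using only $\tau_i^2=1$ and far commutativity, one finds $\gamma_{t,i}=1$ unless sliding the syllable $\tau_i$ into or out of the sorted word $t$ is blocked by a neighbouring $\tau_{i+1}$; precisely, $\gamma_{t,i}\ne1$ exactly when $i\le m$ and the subset $S\subseteq\{1,\dots,m+1\}$ underlying $t$ contains $i+1$, leaving $m\cdot2^{m}$ of them non-trivial. The relators of $TW_{m+2}'$ are the transversal conjugates $t\,r\,t^{-1}$ of the defining relators $r$, rewritten in the $\gamma_{t,i}$: each relator from $\tau_i^2$ collapses to a relation $\gamma_{t,i}\,\gamma_{t',i}=1$ with $t'=\overline{t\tau_i}$, which pairs off ``insertion-type'' and ``deletion-type'' generators, whereas each relator from a far commutator $[\tau_i,\tau_j]$ rewrites to a length-four relation in the $\gamma$'s that either merely identifies generators differing only in the part of $S$ away from $i$, or records a genuine conjugation or commutation relation. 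Collapsing all the forced identifications, I would then name the $m(m+1)/2$ survivors $\beta_p(j)$, $0\le p<j\le m$, with $j$ recording the relevant index $i=j$ and $p$ encoding the reduced local shape of $S$ near $i$; arranging this bijection so that the relations emerge in the displayed normal form is the step that has to be set up with care.

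The decisive step is then to show that, in these coordinates, the whole Reidemeister--Schreier relator set is equivalent to the two displayed families
\[
\beta_{j-k}(j)\,\beta_{t-(j+l)}(t)=\beta_{t-(j+l)}(t)\,\beta_{j-k}(j),\qquad
\beta_{t-k}(t)=\beta_{j-k}(j)^{-1}\,\beta_{t-(j+1)}(t)\,\beta_{j-k}(j),
\]
over the ranges $l\ge3$, $1\le k\le j$, $j+2\le t\le m$: one checks that every rewritten, renamed transversal conjugate of a defining relator is trivial or a consequence of these, and conversely that each listed relation arises this way. I expect this matching to be the \emph{main obstacle}; it amounts to organizing the $2^{m+1}$ conjugations, tracking how far commutativity permutes syllables, and pinning down exactly the index ranges for which a non-redundant relation actually survives (equivalently, showing that all the ``boundary'' conjugates collapse). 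As a consistency check at the end, using the second family to eliminate $\beta_p(t)$ for $2\le p\le t-1$ at each level $t\ge3$ leaves exactly $\beta_0(1)$, $\beta_0(2),\beta_1(2)$ and $\beta_0(t),\beta_1(t)$ for $3\le t\le m$, i.e.\ $2m-1$ generators, matching the asserted rank; and for $m\le3$ the first family is vacuous, consistent with $TW_{m+2}'$ being free in that range.
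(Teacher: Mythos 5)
Your strategy is exactly the paper's: run Reidemeister--Schreier over the mod--$2$ abelianization using the increasing-word (subset) transversal, then simplify with Tietze transformations. All of your checkable side claims are right: the transversal is prefix-closed and complete; the nontrivial Schreier generators are precisely the $\gamma_{t,i}$ with $i\le m$ and $i+1$ in the underlying subset (so $m\cdot 2^{m}$ of them, falling into conjugates of $(\tau_i\tau_{i+1})^{\pm 2}$ that the $\tau_i^2$-relators pair off as mutual inverses); the survivor count $m(m+1)/2$ before the final elimination and $2m-1$ after; and the vacuousness of the commutation family for $m\le 3$.

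The gap is that the step you yourself flag as the ``decisive step'' is the entire content of the theorem, and you do not carry it out. Three computations are missing. First, the rewritten transversal conjugates of the far-commutator relators $\tau_t\tau_j\tau_t\tau_j$ must be evaluated case by case (the paper needs nine subcases according to whether $j$, $j+1$, $t$, $t+1$ lie in the subset); the outcome --- that every surviving relation is either an inverse relation $\alpha(\cdots;j)\,\beta(\cdots;j)=1$, an identification $\beta(\ldots,j,\widehat{j+1},\ldots;t)=\beta(\ldots,\widehat{j},\widehat{j+1},\ldots;t)$, or a four-term conjugation relation --- is not obvious without doing it. Second, one must observe that iterating the identifications collapses every generator $\beta(i_1,\ldots,i_s;j)$ onto a normal form $\beta_p(j)=\beta(j-p,\ldots,j-1;j)$ indexed by a consecutive block ending at $j-1$; this is what produces the parameter $p$ and the count $m(m+1)/2$, which you assert but do not derive. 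Third --- and this is where the precise index ranges come from --- when the conjugation relations are rewritten in normal forms one must track the highest missing entry of the left-hand generator: it cannot equal $j$ or $j+1$, and if it lies above $j+1$, say at $j+l-1$ with $l\ge 3$, the relation degenerates into the commutation $\beta_{j-k}(j)\,\beta_{t-(j+l)}(t)=\beta_{t-(j+l)}(t)\,\beta_{j-k}(j)$, while if it lies below $j$ it becomes $\beta_{t-k}(t)=\beta_{j-k}(j)^{-1}\,\beta_{t-(j+1)}(t)\,\beta_{j-k}(j)$. Your write-up correctly predicts the answer and passes its own consistency checks, but it does not supply this bookkeeping, which is where all the work (and all the risk of error) lives.
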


\medskip Even if a group is finitely generated, it is a non-trivial problem to compute its rank, that is the smallest cardinality of a generating set for the group. In \cite{pv}, Panov and Ver\"evkin constructed classifying spaces for the commutator subgroups of  right-angled Coxeter groups and have given a general formula for the rank of such groups, see \cite[Theorem 4.5]{pv}.  However, the number of minimal generators given in \cite{pv} is in general form and involves the rank of the zeroth homology groups of certain subcomplexes of  the underlying classifying space. As an immediate application of \thmref{mainth}, we obtain the rank of $TW_n'$ in terms of the `arcs' of the twin group, or the `dimension' of the cartographical group, and thus it is more explicit in our context. We have the following.
\begin{theorem}\label{thmrank}
For $m \geq 1$, the group $TW_{m+2}'$  has rank $2m-1$.	
\end{theorem}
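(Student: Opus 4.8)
The plan is to prove the two inequalities $\operatorname{rank}(TW_{m+2}')\le 2m-1$ and $\operatorname{rank}(TW_{m+2}')\ge 2m-1$ separately, working throughout from the presentation in \thmref{mainth}. That presentation uses $\binom{m+1}{2}$ generators $\beta_p(j)$, so the content of both bounds is that most of these are redundant and that exactly $2m-1$ of them are not.

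For the upper bound I would take the candidate generating set
\[
S=\{\beta_0(1)\}\cup\{\beta_0(j),\beta_1(j)\ :\ 2\le j\le m\},
\]
which has $1+2(m-1)=2m-1$ elements, and show $\langle S\rangle=TW_{m+2}'$. The tool is the second family of relations of \thmref{mainth}: specializing the parameters there as $j\mapsto j-2$ and $t\mapsto j$ (the extremal case $t=j+2$) gives, for $3\le j\le m$ and $1\le k\le j-2$,
\[
\beta_{j-k}(j)=\beta_{j-2-k}(j-2)^{-1}\,\beta_1(j)\,\beta_{j-2-k}(j-2).
\]
As $k$ runs from $1$ to $j-2$, the left-hand sides are precisely $\beta_2(j),\dots,\beta_{j-1}(j)$, while the right-hand sides involve only $\beta_1(j)$ and the complete list of level-$(j-2)$ generators $\beta_0(j-2),\dots,\beta_{j-3}(j-2)$. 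An induction on the level $j$ then shows every $\beta_p(j)$ with $p\ge 2$ lies in $\langle S\rangle$: the cases $j\le 2$ are vacuous, and in the inductive step the level-$(j-2)$ generators belong to $\langle S\rangle$ because $\beta_0(j-2),\beta_1(j-2)\in S$ and the remaining ones are in $\langle S\rangle$ by the inductive hypothesis. Hence $S$ generates $TW_{m+2}'$ and $\operatorname{rank}(TW_{m+2}')\le 2m-1$.

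For the lower bound I would pass to the abelianization $A:=TW_{m+2}'/TW_{m+2}''$ and show $A\cong\mathbb Z^{2m-1}$; since any generating set of $TW_{m+2}'$ maps onto a generating set of $A$, this forces $\operatorname{rank}(TW_{m+2}')\ge\operatorname{rank}(A)=2m-1$. Abelianizing the presentation of \thmref{mainth}, the first family of relations becomes trivial, and the second family becomes $\bar\beta_{t-k}(t)=\bar\beta_{t-(j+1)}(t)$. Taking $j=t-2$ and letting $k$ range over $1,\dots,t-2$ shows that for each $t$ with $3\le t\le m$ one has $\bar\beta_1(t)=\bar\beta_2(t)=\cdots=\bar\beta_{t-1}(t)$; conversely every relation $\bar\beta_{t-k}(t)=\bar\beta_{t-(j+1)}(t)$ with $1\le k\le j\le t-2$ is a consequence of these, since both indices $t-k$ and $t-(j+1)$ then lie in $\{1,\dots,t-1\}$. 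Therefore $A$ is free abelian on $\bar\beta_0(1)$ together with $\bar\beta_0(j),\bar\beta_1(j)$ for $2\le j\le m$, i.e. $A\cong\mathbb Z^{2m-1}$, which cannot be generated by fewer than $2m-1$ elements. Combining the two bounds yields $\operatorname{rank}(TW_{m+2}')=2m-1$.

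The routine but delicate part is the bookkeeping. In the upper bound one must check that the extremal specialization is legitimate --- the constraint $j+2\le t\le m$ of \thmref{mainth} is met with equality, and the index ranges $0\le p<j$ are respected --- and organize the induction so that no circularity arises, with the parity shift $j\mapsto j-2$ bottoming out correctly at level $1$ or $2$. In the lower bound one must confirm that the second family of relations contributes \emph{nothing} beyond collapsing each level: the generators $\bar\beta_0(j)$ must remain untouched and no relation may link distinct levels, so that $A$ is exactly $\mathbb Z^{2m-1}$ and not a proper quotient. I expect this last point to be the main thing requiring care.
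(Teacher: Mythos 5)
Your proposal is correct and follows essentially the same route as the paper: the upper bound by using the conjugation relations of Theorem~\ref{mainth} to eliminate all $\beta_p(j)$ with $p\ge 2$ in favour of $\beta_0$'s and $\beta_1$'s (the paper steps down one index at a time conjugating by $\beta_0(m)$'s, while you use the extremal specialization $t=j+2$ with an induction on the level, but this is the same idea), and the lower bound by computing the abelianization $TW_{m+2}'/TW_{m+2}''\cong\mathbb{Z}^{2m-1}$ exactly as the paper does.
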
 

The following is a consequence of the above two theorems.

\begin{corollary}\label{cor1}
	For $m \ge 1$, the quotient group $~ TW_{m+2}'/TW_{m+2}''$, is isomorphic to the free abelian group of rank $~ 2m-1$, i.e. the group $~ \bigoplus_{i=1}^{2m-1} \Z.$ In particular,  $TW_{m+2}'$ is not perfect for any $m \ge 1$.
\end{corollary}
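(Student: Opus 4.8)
The plan is to realize $TW_{m+2}'/TW_{m+2}''$ as the abelianization of the finitely presented group $TW_{m+2}'$ from \thmref{mainth}, and then to read off both its isomorphism type and its rank by a bookkeeping argument on the defining relations.

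For any group $H$ one has $H/H' \cong H^{\mathrm{ab}}$, so $TW_{m+2}'/TW_{m+2}''$ is precisely the abelianization of $TW_{m+2}'$. I would abelianize the presentation of \thmref{mainth} directly: the commutator relations become vacuous, while each relation $\beta_{t-k}(t) = \beta_{j-k}(j)^{-1}\,\beta_{t-(j+1)}(t)\,\beta_{j-k}(j)$ turns into the identification $\bar\beta_{t-k}(t) = \bar\beta_{t-(j+1)}(t)$, where a bar denotes the image in the abelianization. Hence $TW_{m+2}'/TW_{m+2}''$ is the free abelian group on the symbols $\{\beta_p(j) : 0 \le p < j \le m\}$ modulo the subgroup generated by the differences $\beta_{t-k}(t) - \beta_{t-(j+1)}(t)$ (for $1 \le k \le j$, $j+2 \le t \le m$). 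Because every relator of this abelianized presentation is a difference of two of the free generators, the quotient is automatically torsion-free, hence free abelian, with basis the set of equivalence classes of generators under these identifications; so the whole problem reduces to counting those classes.

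To count them, fix the second coordinate $n$ and look at the generators $\beta_p(n)$, $0 \le p \le n-1$. If $n \le 2$ the constraint $j+2 \le t = n$ has no solution, so no relation touches these generators and they contribute $n$ classes. If $n \ge 3$, take $j = n-2$, so that $t-(j+1) = 1$: as $k$ runs over $1, \dots, n-2$ the relations $\bar\beta_{n-k}(n) = \bar\beta_{1}(n)$ collapse the indices $1, 2, \dots, n-1$ into a single class, whereas the index $0$ never occurs on either side of any relation with second coordinate $n$ and so is a class by itself; thus these generators contribute exactly $2$ classes. Since each relation only involves generators sharing the same second coordinate, classes for different values of $n$ are disjoint, and one checks that smaller values of $j$ produce no new identifications. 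Summing over $n = 1, \dots, m$ gives $1 + 2 + 2(m-2) = 2m-1$ classes for every $m \ge 1$; this matches the rank computed in \thmref{thmrank}, and it shows $TW_{m+2}'/TW_{m+2}'' \cong \bigoplus_{i=1}^{2m-1}\Z$. Finally, as $2m-1 \ge 1$, this quotient is nontrivial, so $TW_{m+2}'' \subsetneq TW_{m+2}'$ and $TW_{m+2}'$ is not perfect.

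I expect the only real work to be combinatorial: confirming that, for each fixed second coordinate $n \ge 3$, the relations collapse exactly the indices $\{1, \dots, n-1\}$ (no more, no less), that $\{0\}$ stays isolated, and that no identification crosses between distinct second coordinates — i.e. pinning down the exact partition of the generating set, not merely a bound on the number of classes. The remaining ingredients (abelianizing the presentation, recognizing the resulting quotient as free abelian, and deducing non-perfectness) are routine.
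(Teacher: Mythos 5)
Your argument is correct and is essentially the paper's own proof: both abelianize the presentation of \thmref{mainth}, observe that the conjugation relations collapse to identifications $\bar\beta_{t-k}(t)=\bar\beta_{t-(j+1)}(t)$ among generators sharing the same second coordinate, and count the resulting classes (your classes $\{0\}$ and $\{1,\dots,n-1\}$ are exactly the paper's surviving generators $\beta_0(j)$ and $\beta_1(j)$, plus $\beta_0(1)$), giving $\Z^{2m-1}$ and hence non-perfectness.
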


We further characterize freeness of $TW_n'$ in the following corollary. 

\begin{cor}\label{corfree}
	$TW_{m+2}'$ is a free group if and only if $m \le 3$. The group $TW_3'$ is infinite cyclic. The groups $TW_4'$ and $TW_5'$ are free groups of rank $3$ and $5$ respectively. 
\end{cor}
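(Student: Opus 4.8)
The plan is to exploit the explicit presentation of Theorem \ref{mainth} together with the rank computation of Theorem \ref{thmrank}. First I would observe that the second family of relations, $\beta_{t-k}(t) = \beta_{j-k}(j)^{-1}\beta_{t-(j+1)}(t)\beta_{j-k}(j)$, is a rewriting rule that expresses many generators in terms of others; using it systematically one can perform Tietze transformations to eliminate redundant generators and arrive at a presentation on $2m-1$ generators (matching Theorem \ref{thmrank}). The key point is then to track what the first family of commutation relations becomes after this elimination: for $m \le 3$ one checks directly (small finite computation on the index ranges $0 \le p < j \le m$) that \emph{every} surviving relation is trivial, so the reduced presentation has no relators at all, hence $TW_{m+2}'$ is free of rank $2m-1$. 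This immediately gives the infinite cyclic group for $m=1$ ($2m-1=1$), and free groups of ranks $3$ and $5$ for $m=2,3$.

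For the converse, $m \ge 4$, I would show that a nontrivial relation survives. The cleanest route is homological: if $TW_{m+2}'$ were free of rank $2m-1$, then its abelianization $TW_{m+2}'/TW_{m+2}''$ would be $\Z^{2m-1}$ and, crucially, $TW_{m+2}''$ would itself be free, in particular the second derived quotient would be torsion-free or the group would have cohomological dimension $1$. A more hands-on approach: the first family of relations in Theorem \ref{mainth} is nonempty exactly when there exist admissible indices with $j+2 \le t \le m$ and $l \ge 3$, i.e. $j + 3 \le t \le m$, which forces $m \ge j+3 \ge 3$; combined with needing $1 \le k \le j$ so $j \ge 1$, the first genuinely nontrivial commutator relation appears at $m = 4$. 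So I would exhibit, for $m = 4$, an explicit nontrivial commutator relation among the reduced generators that is not a consequence of the others, e.g. by mapping onto a group known to be non-free (such as $\Z^2$ appearing as a subgroup), contradicting freeness; and for $m > 4$ the $m = 4$ case embeds as a retract, or one simply notes the relation persists.

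Alternatively — and this may be the slickest packaging — I would combine Corollary \ref{cor1} with a counting argument: a free group of rank $r$ has abelianization $\Z^r$, and Corollary \ref{cor1} already tells us $TW_{m+2}'/TW_{m+2}''\cong \Z^{2m-1}$, so \emph{if} $TW_{m+2}'$ is free it must be free of rank exactly $2m-1$. Then freeness is equivalent to the reduced presentation being relator-free, and I would prove that the number of defining relations in the reduced presentation is zero iff $m \le 3$ by the index analysis above. The main obstacle I anticipate is the bookkeeping in the Tietze reduction: one must choose a consistent normal form for the generators $\beta_p(j)$ under repeated application of the conjugation relations and verify that no hidden relation among the $2m-1$ survivors is introduced — this is a finite but delicate combinatorial check on the poset of index pairs $(p,j)$, and getting the base cases $m=1,2,3$ exactly right (so that the relator set is genuinely empty, not merely small) is where the argument must be airtight.
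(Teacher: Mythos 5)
Your argument for the direction $m \le 3$ is sound and is essentially what the paper does: read off from \thmref{mainth} that for $m=1,2$ the relator set is empty and for $m=3$ the single relation $\beta_2(3)=\beta_0(1)^{-1}\beta_1(3)\beta_0(1)$ merely eliminates a generator, yielding free groups of ranks $1$, $3$, $5$.

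The converse ($m\ge 4$) is where your proposal has a genuine gap. Neither of the two mechanisms you invoke actually rules out freeness as stated. First, exhibiting a relator in a presentation that ``is not a consequence of the others'' proves nothing about the group: free groups admit many presentations with nontrivial, non-redundant relators (deficiency is not an invariant detecting freeness in this direct way), so ``a relation survives'' is not a proof. Second, ``mapping onto a group known to be non-free'' cannot contradict freeness either, since $F_2$ surjects onto $\Z^2$. The only version of your idea that works is the subgroup one, and it needs an extra verification you have not supplied: you must show that the commuting pair, say $\beta_0(1)$ and $\beta_0(4)$ for $m\ge 4$, actually generates a copy of $\Z^2$ inside $TW_{m+2}'$ --- not merely that they satisfy a commuting relation (a priori one could be a power of the other, or torsion). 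This follows by combining the commutation relation of \thmref{mainth} with the computation of the abelianization in \corref{cor1}, where these two generators map to independent basis vectors of $\Z^{2m-1}$; the subgroup they generate is then an abelian group surjecting onto $\Z^2$, hence is $\Z^2$, and since $\Z^2$ embeds in no free group, $TW_{m+2}'$ is not free. Your vaguer homological alternative (``the second derived quotient would be torsion-free or the group would have cohomological dimension $1$'') does not lead anywhere concrete. For comparison, the paper avoids all of this by quoting the Panov--Ver\"evkin criterion: the commutator subgroup of a right-angled Coxeter group is free if and only if its Coxeter graph is chordal, and for $n\ge 6$ the graph of $TW_n$ contains the chordless $4$-cycle on $v_1,v_4,v_2,v_5$. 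That route is shorter and transfers the burden to a cited theorem, whereas your $\Z^2$-subgroup route, once the independence step is added, is self-contained and also yields the non-hyperbolicity statement of the paper's Corollary on word-hyperbolicity.
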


As applications to the above results, we derive geometric properties of the ambient group $TW_{m+2}$. It is clear from the presentation in \thmref{mainth} that for $m \geq 4$, $TW_{m+2}'$ contains free abelian subgroups of rank $\geq 2$. By \cite[Theorem B]{mou}, this shows that $TW_{m+2}$ is not word-hyperbolic for $m \ge 4$. Whereas from \corref{corfree} we observe that $TW_{m+2}$ is virtually free for $m \le 3$; so it is clear that $TW_{m+2}$ is word-hyperbolic for $m \le 3$. Hence we have the following characterization for word-hyperbolicity of $TW_{m+2}$. 
\begin{cor}\label{wh}
The group $TW_{m+2}$ is word-hyperbolic if and only if $m \leq 3$. 
\end{cor}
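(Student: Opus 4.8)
The plan is to establish the two implications separately, using \corref{corfree} for the "if" direction and \thmref{mainth} together with \corref{cor1} for the "only if" direction, and to reduce everything to standard facts about word-hyperbolic and (virtually) free groups.

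For sufficiency, suppose $m \le 3$. By \corref{corfree} the subgroup $TW_{m+2}'$ is free (infinite cyclic, $F_3$, or $F_5$). As noted in the Introduction, $TW_{m+2}'$ has finite index in $TW_{m+2}$ — concretely it is the kernel of the abelianization map and $TW_{m+2}/TW_{m+2}' \cong (\Z/2)^{m+1}$ is finite. Hence $TW_{m+2}$ is virtually free, and a group with a finite-index free subgroup is word-hyperbolic; this direction needs nothing beyond quoting those facts.

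For necessity I would argue the contrapositive, exhibiting a copy of $\Z^2$ inside $TW_{m+2}'$ directly from the presentation in \thmref{mainth}. When $m \ge 4$, both $\beta_0(1)$ and $\beta_0(4)$ occur among the generators of $TW_{m+2}'$, and specializing the first family of defining relations to $j=k=1$, $t=4$, $l=3$ (so that the exponent $t-(j+l)$ equals $0$) yields exactly $\beta_0(1)\,\beta_0(4) = \beta_0(4)\,\beta_0(1)$. Thus $H := \langle \beta_0(1),\beta_0(4)\rangle$ is abelian, hence a quotient of $\Z^2$. To upgrade this to $H \cong \Z^2$, I would map down to the abelianization $TW_{m+2}'/TW_{m+2}'' \cong \bigoplus_{i=1}^{2m-1}\Z$ of \corref{cor1} and check that the images of $\beta_0(1)$ and $\beta_0(4)$ are distinct members of a free basis: after abelianizing, the second family of relations only identifies $\beta_1(j),\dots,\beta_{j-1}(j)$ with one another for each fixed $j$ and never involves any $\beta_0(j)$, so the classes of the generators $\beta_0(j)$ stay independent. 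Then $H$ surjects onto $\Z^2$ while being a quotient of $\Z^2$, forcing $H \cong \Z^2$. Since a word-hyperbolic group contains no subgroup isomorphic to $\Z^2$ (\cite[Theorem B]{mou}) and $H \le TW_{m+2}' \le TW_{m+2}$, the group $TW_{m+2}$ is not word-hyperbolic.

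The only non-formal step is confirming that $H$ is free abelian of rank $2$ rather than cyclic, and that is where I expect the actual content to lie; however, the required bookkeeping — which generators $\beta_p(j)$ survive independently upon abelianizing — is precisely the computation behind \thmref{thmrank} and \corref{cor1}, so in the final write-up I would present "$\beta_0(1)$ and $\beta_0(4)$ are independent in $TW_{m+2}'/TW_{m+2}''$" as a one-line consequence of that analysis rather than re-deriving it.
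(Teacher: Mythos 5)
Your argument is correct and follows the paper's own route: virtual freeness via \corref{corfree} gives hyperbolicity for $m\le 3$, and the commuting pair $\beta_0(1),\beta_0(4)$ from \thmref{mainth} obstructs it for $m\ge 4$. Your extra step of passing to $TW_{m+2}'/TW_{m+2}''$ to confirm that $\langle\beta_0(1),\beta_0(4)\rangle$ is genuinely $\Z^2$ (and not a proper quotient) is a detail the paper leaves implicit behind ``it is clear from the presentation,'' and is worth keeping.
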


Gordon, Long and Reid proved in \cite{glr} that a coxeter group $G$ is virtually free if and only if $G$ does not contain a surface group. Since $TW_{m+2}$ is finitely generated, by \corref{wh}, it can not be virtually free for $m \geq 4$. Hence we have the following.
\begin{cor}
The group $TW_{m+2}$ does not contain a surface group if and only if $m \leq 3$. 
\end{cor}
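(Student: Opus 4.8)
The plan is to deduce the final statement directly from the result of Gordon, Long and Reid together with \corref{wh}. By that theorem, a Coxeter group $G$ is virtually free if and only if $G$ does not contain a surface group. So the statement to be proved is equivalent to: $TW_{m+2}$ is virtually free if and only if $m \le 3$. I will verify this equivalence in the two directions.

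For the forward direction ($m \le 3$), I would invoke \corref{corfree}: the commutator subgroup $TW_{m+2}'$ is free (infinite cyclic for $m=1$, free of rank $3$ and $5$ for $m = 2, 3$). Since $TW_{m+2}'$ has finite index in $TW_{m+2}$ — being the kernel of the abelianization map onto a finite group, as the generators $\tau_i$ have order two — this exhibits $TW_{m+2}$ as virtually free. Hence by the Gordon–Long–Reid theorem it contains no surface group.

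For the converse, suppose $m \ge 4$. By \corref{wh}, $TW_{m+2}$ is not word-hyperbolic, so in particular it cannot be virtually free (a virtually free group, being virtually the fundamental group of a finite graph, is word-hyperbolic). Since $TW_{m+2}$ is a finitely generated Coxeter group that is not virtually free, the Gordon–Long–Reid theorem forces it to contain a surface group. Combining the two directions gives the stated characterization.

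The argument is essentially a bookkeeping assembly of results already in place, so there is no serious obstacle; the only point requiring a word of care is the standard fact that virtually free groups are word-hyperbolic (used in the converse), which follows since a finitely generated free group is word-hyperbolic and word-hyperbolicity is a commensurability invariant. One should also note explicitly that $TW_{m+2}$ is a Coxeter group — which is immediate from its defining presentation, all generators being involutions with the only further relations being commutations — so that the Gordon–Long–Reid criterion applies.
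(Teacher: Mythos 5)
Your argument is correct and follows the same route as the paper: apply the Gordon--Long--Reid characterization, using Corollary \ref{corfree} (freeness of the finite-index subgroup $TW_{m+2}'$) for $m \le 3$ and the failure of word-hyperbolicity from Corollary \ref{wh} to rule out virtual freeness for $m \ge 4$. The extra care you take in spelling out why virtually free implies word-hyperbolic and why $TW_{m+2}'$ has finite index is sound but matches what the paper leaves implicit.
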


According to \cite[Theorem B]{kala}, and also \cite[Theorem 1]{krist}, any finite extension of a free group of finite rank has a finitely presented automorphism group. Noting that $TW_n/TW_n'$ is a finite group and using \corref{corfree} we have an immediate corollary as follows. 
\begin{corollary}\label{cor5}
The automorphism group of $TW_{m+2}$ is  finitely presented for $m \leq 3$.
\end{corollary}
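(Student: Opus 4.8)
The plan is to deduce this directly from \corref{corfree} together with the cited results on automorphism groups of virtually free groups, so most of the work has already been done. First I would observe that $TW_{m+2}$ is a finite extension of $TW_{m+2}'$: the abelianization $TW_{m+2}/TW_{m+2}'$ is generated by the images of $\tau_1,\dots,\tau_{m+1}$, each of order dividing $2$, and the braid-type commuting relations give nothing beyond what the relations $\tau_i^2=1$ already impose on the abelianization, so $TW_{m+2}/TW_{m+2}' \cong (\Z/2\Z)^{m+1}$, which is finite. Hence $TW_{m+2}'$ is normal of finite index in $TW_{m+2}$.

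Next, for $m \le 3$, \corref{corfree} tells us that $TW_{m+2}'$ is a free group of finite rank (infinite cyclic for $m=1$, and free of rank $3$, respectively $5$, for $m=2,3$). Combining this with the previous paragraph, for $m \le 3$ the group $TW_{m+2}$ is a finite extension of a finitely generated free group. Now I would invoke \cite[Theorem B]{kala} (equivalently \cite[Theorem 1]{krist}), which asserts that any finite extension of a free group of finite rank has a finitely presented automorphism group; applied to $TW_{m+2}$ this immediately gives the corollary.

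I do not expect a genuine obstacle: the substantive content lies entirely in \corref{corfree}, whose proof extracts freeness from the presentation of \thmref{mainth}, and that step is carried out earlier. The only points needing care are (i) verifying that $TW_{m+2}/TW_{m+2}'$ is finite, which is immediate from the presentation of $TW_{m+2}$, and (ii) checking that the hypotheses of the cited theorems are met — in particular that ``finite extension'' there means finite index, which holds here. For completeness I would also note why the argument does not extend past $m=3$: by \thmref{mainth}, for $m \ge 4$ the group $TW_{m+2}'$ contains a copy of $\Z^2$, hence is not free and $TW_{m+2}$ is not virtually free, so these results simply do not apply in that range.
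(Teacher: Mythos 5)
Your proposal is correct and follows exactly the paper's argument: $TW_{m+2}/TW_{m+2}'$ is finite (being $(\Z/2\Z)^{m+1}$, as computed in \secref{gen}), so $TW_{m+2}'$ is a finite-index free subgroup for $m\le 3$ by \corref{corfree}, and the cited results of Kalajd\v{z}ievski and Krsti\'c apply. No differences worth noting.
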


\medskip We have proved \thmref{mainth} by systematic use of the Reidemeister-Schreier algorithm. This method is a well-known technique to obtain presentations for subgroups, for details see \cite{mks}.  This algorithm has been used to obtain presentations for certain classes of generalized braid groups and Artin groups in  \cite{bgn1}, \cite{dg1}, \cite{lo},  \cite{man}. We obtain a presentation for $TW_n'$, $n \geq 3$,  using this approach and then remove some of the generators using Tietze transformations. This gives the finite presentation for $TW_n'$. We further reduce the number of generators in this presentation to obtain the rank. 

\medskip  Now we briefly describe the structure of the paper. In \secref{gen}, we compute a generating set for $TW_n'$, $n \geq 3$, using the Reidemeister-Schreier method. In \secref{dr}, a set of defining relations for $TW_n'$ involving these generators is obtained. We then apply Tietze transformations to prove \thmref{mainth} in \secref{simp}. Following this theorem, in \secref{simp}, we also prove \thmref{thmrank}, \corref{cor1} and \corref{corfree}. 

\section{A Generating Set for $TW_n'$}\label{gen}

For $n \geq 3$, define the following map: 
\begin{equation*}\phi : TW_n \longrightarrow \ \underbrace{\Z_2 \oplus \Z_2 \oplus \dots \oplus \Z_2}_\text{(n -- 1) copies} = \bigoplus_{i=1}^{n-1} \Z_2 \end{equation*}
where, for $ i = 1, \ldots , n-1  $, \ $\phi$ maps $\tau_i$ to the generator of the \ $i$ th copy of $\Z_2$ in the product \  $ \bigoplus_{i=1}^{n-1} \Z_2 $ .

Here, Image($\phi$) is isomorphic to the abelianization of $TW_n$, denoted as $TW_n^{ab}$. To prove this, we abelianize the above presentation for $TW_n$ by inserting the relations $ ~ \tau_i \tau_j=\tau_j \tau_i ~ $ (for all $i,j$) in the presentation. The resulting presentation is the following:\\
$$\langle \tau_1, \dots , \tau_{n-1} ~ | ~ \tau_i \tau_j=\tau_j \tau_i, ~ \tau_i^2=1, ~ i,j \in \{ 1,2, \dots n-1 \} \rangle.$$

Clearly, the above is a presentation for $\bigoplus_{i=1}^{n-1} \Z_2$. Thus,  $TW_n^{ab}$ is isomorphic to $\bigoplus_{i=1}^{n-1} \Z_2$. But as $\phi$ is onto, Image($\phi$) = $\bigoplus_{i=1}^{n-1} \Z_2$, i.e. Image($\phi$) is isomorphic to $TW_n^{ab}$. Hence, we get the following short exact sequence:
\begin{equation*}\label{se1}1 \xrightarrow {} TW_n' \hookrightarrow{} TW_n \xrightarrow{\phi} \ \bigoplus_{i=1}^{n-1} \Z_2 \ \xrightarrow{} 1.\end{equation*}  

\begin{lemma}\label{lemma0}
 For $n \ge 3$, $TW_n'$ is generated by the conjugates of $ \ \tau_j \tau_{j+1} \tau_j \tau_{j+1} $ and $ \ \tau_{j+1} \tau_j \tau_{j+1} \tau_j $ by the elements $ \ \tau_{i_1} \tau_{i_2} \dots \tau_{i_s} \ $ for all $ \ j \in \{ 1, 2, \dots , n-2 \} $ and $~ 1 \le i_1 < i_2 < \dots < i_s < j$.
\end{lemma}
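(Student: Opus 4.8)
The plan is to apply the Reidemeister–Schreier method to the short exact sequence
\[
1 \to TW_n' \to TW_n \xrightarrow{\phi} \bigoplus_{i=1}^{n-1}\Z_2 \to 1,
\]
using the natural Schreier transversal consisting of products $\tau_{i_1}\tau_{i_2}\cdots\tau_{i_s}$ with $1 \le i_1 < i_2 < \cdots < i_s \le n-1$. First I would verify that this set of $2^{n-1}$ ordered squarefree monomials really is a Schreier transversal for $TW_n'$ in $TW_n$: each coset of $TW_n'$ is determined by the image under $\phi$, i.e.\ by which generators $\tau_i$ appear an odd number of times, so these monomials represent each coset exactly once; the Schreier (prefix-closed) property is immediate since truncating the increasing sequence $i_1 < \cdots < i_s$ gives another element of the transversal. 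This is routine but needs to be written down carefully.

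Next, the Reidemeister–Schreier theorem gives a generating set for $TW_n'$ indexed by pairs (transversal element $w$, generator $\tau_i$): the element $\gamma(w,\tau_i) = w\,\tau_i\,\overline{w\tau_i}^{\,-1}$, where $\overline{x}$ denotes the transversal representative of the coset $x TW_n'$. Many of these are trivial and should be discarded. If $i \notin \{i_1,\dots,i_s\}$ and $i$ is larger than all of them, then $w\tau_i$ is already reduced and in the transversal, so $\gamma(w,\tau_i) = 1$. Using the relations $\tau_i^2 = 1$ and the far commutativity $\tau_i\tau_j = \tau_j\tau_i$ for $|i-j|>1$, the nontrivial generators reduce to the situation where $\tau_i$ must be shuffled past an adjacent $\tau_{i\pm 1}$ already present in $w$; iterating, one sees that $\gamma(w,\tau_i)$ is expressible as a conjugate, by a squarefree increasing word in $\tau$'s with indices $< j$, of one of the basic ``obstruction'' elements $\tau_j\tau_{j+1}\tau_j\tau_{j+1}$ or $\tau_{j+1}\tau_j\tau_{j+1}\tau_j$ for an appropriate $j$. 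Conversely, each such conjugate is a product of the $\gamma(w,\tau_i)$'s, so they generate. I would make this bookkeeping precise by an induction on $s$ (the length of $w$) or on how far $\tau_i$ has to be transported.

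The main obstacle I anticipate is the combinatorial bookkeeping in this reduction step: tracking exactly how a generator $\gamma(w,\tau_i)$ collapses when $i$ is adjacent to one or two indices occurring in $w$, and confirming that after all the squaring and far-commutation moves the surviving conjugating word is genuinely of the asserted form $\tau_{i_1}\cdots\tau_{i_s}$ with all indices strictly less than $j$ (and increasing). In particular one must check the two edge behaviours — when $j$ and $j+1$ both appear in $w$, and when the conjugating prefix could a priori involve indices $\ge j$ — and show these either reduce to the listed generators or cancel. Once that is done, \lemref{lemma0} follows immediately from the Reidemeister–Schreier generating set together with the reduction; the explicit finite presentation of \thmref{mainth} will then come in later sections by computing the rewritten relators and applying Tietze transformations.
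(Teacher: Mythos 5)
Your proposal follows essentially the same route as the paper: the Reidemeister--Schreier method applied to the kernel of $\phi$ with the Schreier transversal of increasing squarefree monomials, discarding the trivial Schreier generators (the case where $\tau_j$ slides into place, and the case where neither $j$ nor $j+1$ obstructs it) and reducing the remaining ones via far-commutativity to conjugates of $\tau_j\tau_{j+1}\tau_j\tau_{j+1}$ and $\tau_{j+1}\tau_j\tau_{j+1}\tau_j$ by prefixes with indices below $j$. The case analysis you anticipate (whether $j$ and/or $j+1$ occur in the transversal word) is exactly the paper's Subcases 2A--2C, so the plan is sound and the remaining work is only the bookkeeping you describe.
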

\begin{proof} Consider a Schreier set of coset representatives:
$$\Lambda=\{ \tau_1^{\epsilon_1} \tau_2^{\epsilon_2} \dots \tau_{n-1}^{\epsilon_{n-1}} \ | \ \epsilon_i \in \{0, 1\}, \ i=1,2, \dots ,n-1 \}.$$

For $a \in TW_n$, we denote by $\overline{a}$ the unique element in $\Lambda$ which belongs to the coset corresponding to $\phi(a)$ in the quotient group $TW_n/TW_n'$.\\

By \cite[Theorem 2.7]{mks}, the group $TW_n'$ is generated by the set
$$\{S_{\lambda, a}=(\lambda a) (\overline{\lambda a})^{-1} \ | \ \lambda \in \Lambda, \ a \in \{ \ \tau_i \ | \ i=1, 2, \ldots, n-1\} \ \}.$$

Hence, $TW_n'$ is generated by the elements: $S_{\tau_{i_1} \tau_{i_2} \dots \tau_{i_k}, \tau_j }$ for $1 \le i_1 < i_2 < \dots < i_k \le n-1$ and for $1 \le j \le n-1$. We calculate these elements below.

\subsection*{Case 1: \ $i_k \le j$ :}

In this case, $S_{\tau_{i_1} \tau_{i_2} \dots \tau_{i_k}, \tau_j } \ = \ \tau_{i_1} \tau_{i_2} \dots \tau_{i_k} \tau_j \overline{ ( \tau_{i_1} \tau_{i_2} \dots \tau_{i_k} \tau_j )}^{-1} $ \\ 
$ \hbox{\ \ \ \ \ \ \ \ \ \ \ \ \ \ \ \ \ \ \ \ \ \ \ \ \ \ \ \ \ \ \ \ \ \ \ \ \ \ \ \ \ \ \ \ \ \ \ \ \ \ \ \ \ \ \ \ \ \ \ \ \ \ \ \ } = \tau_{i_1} \tau_{i_2} \dots \tau_{i_k} \tau_j \ ( \tau_{i_1} \tau_{i_2} \dots \tau_{i_k} \tau_j )^{-1}=1$.\\

Hence we don't get any nontrivial generator from this case.

\subsection*{Case 2: \ $i_k > j$ :} We divide this case into following 3 subcases.

\subsubsection*{Subcase 2A: \ $i_k > j$ and $ (j+1) \in \{ i_1, i_2, \dots , i_k \} $ but $ j \notin \{ i_1, i_2, \dots , i_k \} $: \\\\}

Suppose $j+1=i_{s+1}$. Then we have:

$S_{\tau_{i_1} \tau_{i_2} \dots \tau_{i_k}, \tau_j } \ = \ \tau_{i_1} \tau_{i_2} \dots \tau_{i_s} \tau_{j+1} \tau_{i_{s+2}} \dots \tau_{i_k} \tau_j \ \overline{ ( \tau_{i_1} \tau_{i_2} \dots \tau_{i_s} \tau_{j+1} \tau_{i_{s+2}} \dots \tau_{i_k} \tau_j )}^{-1} $ \\
$= \tau_{i_1} \tau_{i_2} \dots \tau_{i_s} \tau_{j+1} \tau_{i_{s+2}} \dots \tau_{i_k} \tau_j \ ( \tau_{i_1} \tau_{i_2} \dots \tau_{i_s} \tau_j \tau_{j+1} \tau_{i_{s+2}} \dots \tau_{i_k} )^{-1}$\\
$= \tau_{i_1} \tau_{i_2} \dots \tau_{i_s} \tau_{j+1} \tau_j \tau_{i_{s+2}} \dots \tau_{i_k} \ ( \tau_{i_1} \tau_{i_2} \dots \tau_{i_s} \tau_j \tau_{j+1} \tau_{i_{s+2}} \dots \tau_{i_k} )^{-1}$\\
$= \tau_{i_1} \tau_{i_2} \dots \tau_{i_s} \tau_{j+1} \tau_j \tau_{i_{s+2}} \dots \tau_{i_k} \tau_{i_k} \dots \tau_{i_{s+2}} \tau_{j+1} \tau_j \tau_{i_s} \dots \tau_{i_2} \tau_{i_1} $\\
$= \tau_{i_1} \tau_{i_2} \dots \tau_{i_s} \tau_{j+1} \tau_j \tau_{j+1} \tau_j \tau_{i_s} \dots \tau_{i_2} \tau_{i_1} $.\\

(Here we assume $i_1 < (j+1) < i_k$. The cases $(j+1)=i_1, i_k$ are similar and give same form of elements.)\\

So, we get some of the generators for $TW_n'$ as follows:\\
$ \{ \tau_{i_1} \tau_{i_2} \dots \tau_{i_s} (\tau_{j+1} \tau_j \tau_{j+1} \tau_j) \tau_{i_s} \dots \tau_{i_2} \tau_{i_1} ~ | ~ j \in \{ 1, 2, \dots n-2 \} $ and $i_1 < i_2 < \dots < i_s < j$ where $i_1, i_2, \dots ,i_s,j$ are consecutive integers $ \} $. 

\subsubsection*{Subcase 2B: \ $i_k > j$ and $ j, (j+1) \in \{ i_1, i_2, \dots , i_k \} $: \\\\ }

Suppose $j=i_s, \ j+1=i_{s+1}$. Then we have:

$S_{\tau_{i_1} \tau_{i_2} \dots \tau_{i_k}, \tau_j } \ = \ \tau_{i_1} \tau_{i_2} \dots \tau_{i_{s-1}} \tau_j \tau_{j+1} \tau_{i_{s+2}} \dots \tau_{i_k} \tau_j \ \overline{ ( \tau_{i_1} \tau_{i_2} \dots \tau_{i_{s-1}} \tau_j \tau_{j+1} \tau_{i_{s+2}} \dots \tau_{i_k} \tau_j )}^{-1} $ \\
$= \tau_{i_1} \tau_{i_2} \dots \tau_{i_{s-1}} \tau_j \tau_{j+1} \tau_{i_{s+2}} \dots \tau_{i_k} \tau_j \ ( \tau_{i_1} \tau_{i_2} \dots \tau_{i_{s-1}} \tau_{j+1} \tau_{i_{s+2}} \dots \tau_{i_k} )^{-1}$\\
$= \tau_{i_1} \tau_{i_2} \dots \tau_{i_{s-1}} \tau_j \tau_{j+1} \tau_j \tau_{i_{s+2}} \dots \tau_{i_k} \ ( \tau_{i_1} \tau_{i_2} \dots \tau_{i_{s-1}} \tau_{j+1} \tau_{i_{s+2}} \dots \tau_{i_k} )^{-1}$\\
$= \tau_{i_1} \tau_{i_2} \dots \tau_{i_{s-1}} \tau_j \tau_{j+1} \tau_j \tau_{i_{s+2}} \dots \tau_{i_k} \tau_{i_k} \dots \tau_{i_{s+2}} \tau_{j+1} \tau_{i_{s-1}} \dots \tau_{i_2} \tau_{i_1} $\\
$= \tau_{i_1} \tau_{i_2} \dots \tau_{i_{s-1}} \tau_j \tau_{j+1} \tau_j \tau_{j+1} \tau_{i_{s-1}} \dots \tau_{i_2} \tau_{i_1} $.\\

(Here we assume $i_1 < j < (j+1) < i_k$. The cases $j=i_1$ and $(j+1)= i_k$ are similar and give same form of elements.)\\

So, we get some of the generators for $TW_n'$ as follows:\\
$ \{ \tau_{i_1} \tau_{i_2} \dots \tau_{i_s} ( \tau_j \tau_{j+1} \tau_j \tau_{j+1} ) \tau_{i_s} \dots \tau_{i_2} \tau_{i_1} ~  | ~ j \in \{ 1, 2, \dots n-2 \} $ and $i_1 < i_2 < \dots < i_s < j$ where $i_1, i_2, \dots ,i_s,j$ are consecutive integers $ \}. $ 

\subsubsection*{Subcase 2C: \ $i_k > j$ and $ (j+1) \notin \{ i_1, i_2, \dots , i_k \} $: \\\\ }

There is $i_s \in \{ i_1, i_2, \dots , i_k \} $ such that $i_s \le j < i_{s+1} $. \\
As $ (j+1) \notin \{ i_1, i_2, \dots , i_k \} $, $|i_{s+1} - j |>1$. So we have:\\

$S_{\tau_{i_1} \tau_{i_2} \dots \tau_{i_k}, \tau_j } \ = \ \tau_{i_1} \tau_{i_2} \dots \tau_{i_s} \tau_{i_{s+1}} \dots \tau_{i_k} \tau_j \ \overline{ ( \tau_{i_1} \tau_{i_2} \dots \tau_{i_s} \tau_{i_{s+1}} \dots \tau_{i_k} \tau_j )}^{-1} $\\
$= \ \tau_{i_1} \tau_{i_2} \dots \tau_{i_s} \tau_j \tau_{i_{s+1}} \dots \tau_{i_k} \ \overline{ ( \tau_{i_1} \tau_{i_2} \dots \tau_{i_s} \tau_j \tau_{i_{s+1}} \dots \tau_{i_k} )}^{-1}=1.$\\

So, this case does not give any nontrivial generator for $TW_n'$.
\end{proof}

\subsection{Notation:}
Let us introduce some notations as follows:\\

For $1 \le i_1 < i_2 < \dots < i_s < j \le n-2 ~ $ let us denote

 $$\alpha(i_1, i_2, \dots , i_s \ ; \ j) := \tau_{i_1} \tau_{i_2} \dots \tau_{i_s} ( \tau_j \tau_{j+1} \tau_j \tau_{j+1} ) \tau_{i_s} \dots \tau_{i_2} \tau_{i_1}, $$
 
 $$\beta(i_1, i_2, \dots , i_s \ ; \ j) := \tau_{i_1} \tau_{i_2} \dots \tau_{i_s} ( \tau_{j+1} \tau_j \tau_{j+1} \tau_j ) \tau_{i_s} \dots \tau_{i_2} \tau_{i_1}, $$
 
 $$\alpha(j) := \tau_j \tau_{j+1} \tau_j \tau_{j+1}, \ \ \ \ \ \beta(j) := \tau_{j+1} \tau_j \tau_{j+1} \tau_j.$$
 
 \medskip
  
\section{Defining Relations for $TW_n'$}\label{dr} 

 To obtain defining relations for $TW_n'$, following the Reidemeister-Schreier algorithm, we define a re-writing process $\eta$ as below. Refer \cite{mks} for more details.
$$\eta(a_{i_1}^{\epsilon_1} \dots a_{i_p}^{\epsilon_p}) := S_{K_{i_1},a_{i_1}}^{\epsilon_1} \dots S_{K_{i_p},a_{i_p}}^{\epsilon_p} \hbox{ with } \epsilon_j = 1 \hbox{ or } -1,$$
where if $\epsilon_j = 1$, $K_{i_1} = 1$ and $K_{i_j}$ = $\overline{a_{i_1}^{\epsilon_1} \dots a_{i_{j-1}}^{\epsilon_{j-1}}}, ~ j \ge 2$,  \\ and if $\epsilon_j = -1$, $K_{i_j}$ = $\overline{a_{i_1}^{\epsilon_1} \dots a_{i_j}^{\epsilon_j}}$ .

By \cite[Theorem 2.9]{mks}, the group $TW_n'$ is defined by the relations:
$$\eta(\lambda r_{\mu} \lambda^{-1})=1, ~ \lambda \in \Lambda,$$
where $r_{\mu}$ are the defining relators of $TW_n$.\\

We have the following lemma.

\begin{lemma}\label{lemma1}
	The generators $ \ \alpha(j), \ \beta(j), \ \alpha(i_1, i_2, \dots , i_s \ ; \ j), \ \beta(i_1, i_2, \dots , i_s \ ; \ j)$ satisfy the following defining relations in $TW_n'$:\\
	\begin{equation*}
	\alpha(j) \ \beta(j) = 1, \ \ \text{ for all \  }j \in \{ 1, 2, \dots , n-2 \},
	\end{equation*}
	\begin{equation*}
	\alpha(i_1, \dots , i_s \ ; \ j) \ \beta(i_1, \dots , i_s \ ; \ j) = 1, \ \ \text{ when \ } 1 \le i_1 < i_2 < \dots < i_s < j \le n-2.
	\end{equation*}
	 
\end{lemma}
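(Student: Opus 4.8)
The plan is to read both families of relations directly off the Reidemeister--Schreier presentation just set up: by \cite[Theorem 2.9]{mks} the group $TW_n'$ is defined by the relators $\eta(\lambda r_\mu \lambda^{-1})$ with $\lambda \in \Lambda$ and $r_\mu$ running over the defining relators of $TW_n$, namely $\tau_i^2$ $(1\le i\le n-1)$ and the commutator relators $\tau_i\tau_j\tau_i^{-1}\tau_j^{-1}$ $(|i-j|>1)$. So it suffices, for each asserted relation, to produce one pair $(\lambda,r_\mu)$ whose rewriting equals that relation. (As a sanity check one may note beforehand that $\alpha(j),\beta(j)$ and their conjugates are genuine elements of $TW_n$ and that both products telescope to $1$ there using $\tau_j^2=\tau_{j+1}^2=1$, so the relations certainly hold; the point of the rewriting is to exhibit them among the defining relators of $TW_n'$.)

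Both families will come from the involution relators. Fix $j\in\{1,\dots,n-2\}$ and $1\le i_1<\cdots<i_s<j$ (with $s=0$ allowed), take $r_\mu=\tau_j^2$, and put $\lambda=\tau_{i_1}\cdots\tau_{i_s}\tau_j\tau_{j+1}$; this word is already in increasing order, so $\lambda\in\Lambda$ and every prefix of $\lambda$ also lies in $\Lambda$. I would then evaluate $\eta(\lambda\,\tau_j\,\tau_j\,\lambda^{-1})$ letter by letter. Since every prefix of $\lambda$ is a Schreier representative and appending the next (strictly larger) letter keeps it increasing, each letter of the leading block $\lambda$ contributes the trivial Schreier generator; dually, after the two central $\tau_j$'s the word has again the image of $\lambda$ in $TW_n^{ab}$, so the coset pointer of the rewriting has returned to where the leading block left it, and each letter of the trailing block $\lambda^{-1}$ also contributes (the formal inverse of) the trivial generator. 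The only nontrivial contributions are from the two central letters $\tau_j$; by the computations in the proof of \lemref{lemma0} (Subcase 2A and Subcase 2B) these are
\[
S_{\lambda,\tau_j}=\tau_{i_1}\cdots\tau_{i_s}(\tau_j\tau_{j+1}\tau_j\tau_{j+1})\tau_{i_s}\cdots\tau_{i_1}=\alpha(i_1,\dots,i_s;j)
\]
and
\[
S_{\overline{\lambda\tau_j},\tau_j}=\tau_{i_1}\cdots\tau_{i_s}(\tau_{j+1}\tau_j\tau_{j+1}\tau_j)\tau_{i_s}\cdots\tau_{i_1}=\beta(i_1,\dots,i_s;j).
\]
Hence $\eta(\lambda\tau_j^2\lambda^{-1})=\alpha(i_1,\dots,i_s;j)\,\beta(i_1,\dots,i_s;j)$, and the corresponding defining relation of $TW_n'$ is exactly $\alpha(i_1,\dots,i_s;j)\,\beta(i_1,\dots,i_s;j)=1$. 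Taking $s=0$ (so $\lambda=\tau_j\tau_{j+1}$) yields the first relation $\alpha(j)\beta(j)=1$ as the special case.

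When $i_1,\dots,i_s,j$ are not consecutive the distant generators $\tau_{i_\ell}$ simply commute past the four-letter block, so $\alpha(i_1,\dots,i_s;j)$ and $\beta(i_1,\dots,i_s;j)$ collapse to the generators with consecutive indices; this is harmless and the displayed relation still holds verbatim.

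The only real work is the bookkeeping inside $\eta$: checking that the leading block $\lambda$ and the trailing block $\lambda^{-1}$ contribute nothing. This rests on two facts that are easy to isolate, namely the Schreier property of $\Lambda$ (a prefix of an increasing word, together with the next larger letter, stays increasing, so the relevant $S_{K,\tau_i}$ vanish) and the triviality of the abelianised image of $\tau_j^2$ (which makes the coset pointer return exactly to where $\lambda$ left it, so the contributions of $\lambda^{-1}$ are the formal inverses of the trivial contributions of $\lambda$). Identifying the two central contributions with $\alpha(i_1,\dots,i_s;j)$ and $\beta(i_1,\dots,i_s;j)$ is not new work --- it is a verbatim reuse of the Subcase 2A/2B computations already carried out for \lemref{lemma0}. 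So I expect no genuine obstacle: the proof is a careful but routine application of the Reidemeister--Schreier rewriting to two well-chosen relators.
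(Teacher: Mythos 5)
Your proposal is correct and follows essentially the same route as the paper: apply the Reidemeister--Schreier rewriting $\eta$ to conjugates of the relators $\tau_j^2$, observe that the leading and trailing blocks contribute trivial Schreier generators, and identify the two central contributions with $\alpha(i_1,\dots,i_s;j)$ and $\beta(i_1,\dots,i_s;j)$ via the Subcase 2A/2B computations. The only (minor) difference is that you exhibit one witness $\lambda=\tau_{i_1}\cdots\tau_{i_s}\tau_j\tau_{j+1}$ per relation, whereas the paper runs through \emph{all} cosets $\lambda\in\Lambda$ to record that the $\tau_j^2$-relators yield nothing beyond the listed relations --- a completeness check that the later presentation lemmas rely on, even though it is not needed for the literal statement of this lemma.
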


\begin{proof}
	Following the Reidemeister-Schreier algorithm we will apply the re-writing process $\eta$ on all the conjugates (by the elements $\tau_{i_1} \tau_{i_2} \dots \tau_{i_k}$ of $\Lambda$) of the defining relators in $TW_n$ in order to deduce a set of defining relators for $TW_n'$.\\
	
	For all $j \in \{ 1, 2, \dots, n-1 \} $, we have the relation $\tau_j^2=1$ in $TW_n$. We apply the re-writing process $\eta$ on the conjugates of the relator as follows.\\
	 
	For any element $\tau_{i_1} \tau_{i_2} \dots \tau_{i_k} \in \Lambda$ we have,\\
	$ \eta \ (\tau_{i_1} \tau_{i_2} \dots \tau_{i_k} ( \tau_j \tau_j ) \ \tau_{i_k} \dots \tau_{i_2} \tau_{i_1}) $ \\
	$= S_{1, \tau_{i_1}} S_{ \overline{ \tau_{i_1} } , \tau_{i_2}} \dots S_{ \overline{ \tau_{i_1} \tau_{i_2} \dots \tau_{i_k}}  , \tau_j} \ S_{ \overline{ \tau_{i_1} \tau_{i_2} \dots \tau_{i_k} \tau_j}  , \tau_j} \ S_{ \overline{ \tau_{i_1} \tau_{i_2} \dots \tau_{i_k} } , \tau_{i_k} } \dots S_{ \overline{ \tau_{i_1} \tau_{i_2} } , \tau_{i_2}} S_{ \overline{ \tau_{i_1} }, \tau_{i_1} } $\\
	$= S_{ \overline{ \tau_{i_1} \tau_{i_2} \dots \tau_{i_k}}  , \tau_j} \ S_{ \overline{ \tau_{i_1} \tau_{i_2} \dots \tau_{i_k} \tau_j}  , \tau_j } .$\\
	
	For $i_k \le j$ the above expression vanishes.\\
	
	If we have $i_k > j$ and $ (j+1) \notin \{ i_1, i_2, \dots , i_k \} $ the above expression vanishes.\\
	
	In case $i_k > j$ and $ j, (j+1) \in \{ i_1, i_2, \dots , i_k \} $, assuming $j=i_s$, the above expression equals
	\begin{equation*}
		\alpha(i_1, i_2, \dots , i_{s-1} \ ; \ j) \ \beta(i_1, i_2, \dots , {i_s-1} \ ; \ j).
	\end{equation*}
	And, if $s=1$, then we have:
	\begin{equation*}
	\alpha(j) \ \beta(j).
	\end{equation*}
	
	For $i_k > j$ and $ (j+1) \in \{ i_1, i_2, \dots , i_k \} $ but $ j \notin \{ i_1, i_2, \dots , i_k \} $, assuming $j+1=i_{s+1}$, the above expression equals
	\begin{equation*}
		\beta(i_1, i_2, \dots , i_s \ ; \ j) \ \alpha(i_1, i_2, \dots , i_s \ ; \ j).
	\end{equation*}
	
	Hence, corresponding to the relation $\tau_j^2=1$ in $TW_n$ we have the following defining relations for $TW_n'$:
\begin{equation}
\alpha(j) \ \beta(j) = 1, \text{  for all } 1 \le j \le n-2,
\end{equation}
\begin{equation}
\alpha(i_1, i_2, \dots , i_s \ ; \ j) \ \beta(i_1, i_2, \dots , i_s \ ; \ j) = 1,
\end{equation}
for all $~ i_1, i_2, \dots, i_s, j ~$ such that $ ~ 1 \le i_1 < i_2 < \dots < i_s < j \le n-2.$
\end{proof}

Now, we will find the defining relations in $TW_n'$ corresponding to the defining relations $\tau_t \tau_j \tau_t \tau_j = 1, ~ |t-j|>1$, in $TW_n$.\\

We have the following lemma.
	 
\begin{lemma}\label{lemma2}
	The generators $ \ \alpha(j), \ \beta(j), \ \alpha(i_1, i_2, \dots , i_s \ ; \ j), \ \beta(i_1, i_2, \dots , i_s \ ; \ j)$ satisfy the following defining relations in $TW_n'$:\\
	
	For all $~ i_1, i_2, \dots, i_r, j, t ~$ where $ 1 \le i_1 < i_2 < \dots < i_r < t \le n-2, ~ j \le t-2,$ we have
	\begin{equation*}
	\alpha(i_1, \dots, j,~ \widehat{j+1}, \dots, i_r ; t) ~ \beta(i_1, \dots, \widehat{j},~ \widehat{j+1}, \dots, i_r ; t) = 1,
	\end{equation*}
	\begin{equation*}
	\beta(i_1, \dots, j,~ \widehat{j+1}, \dots, i_r ; t) ~ \alpha(i_1, \dots, \widehat{j},~ \widehat{j+1}, \dots, i_r ; t) = 1,
	\end{equation*}
	\begin{equation*}
	\beta(i_1, \dots, i_s,~ \widehat{j},~ j+1, \dots, i_r ; t) ~ \beta(i_1, \dots, i_s ; j) ~ \alpha(i_1, \dots, i_s,~ j,~ j+1, \dots, i_r ; t) ~ \alpha(i_1, \dots, i_s ; j) = 1,
	\end{equation*}
	\begin{equation*}
	\alpha(i_1, \dots, i_s,~ \widehat{j},~ j+1, \dots, i_r ; t) ~ \beta(i_1, \dots, i_s ; j) ~ \beta(i_1, \dots, i_s,~ j,~ j+1, \dots, i_r ; t) ~ \alpha(i_1, \dots, i_s ; j) = 1.
	\end{equation*}
	
\end{lemma}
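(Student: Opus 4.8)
The plan is to run the Reidemeister--Schreier re-writing process $\eta$ on the conjugates $\tau_{i_1}\cdots\tau_{i_k}\,(\tau_t\tau_j\tau_t\tau_j)\,\tau_{i_k}\cdots\tau_{i_1}$ of the defining relator $\tau_t\tau_j\tau_t\tau_j=1$ (with $|t-j|>1$, say $j\le t-2$) by every Schreier representative $\lambda=\tau_{i_1}\cdots\tau_{i_k}\in\Lambda$, exactly as was done for the relator $\tau_j^2$ in Lemma \ref{lemma1}. First I would record, as in the proof of Lemma \ref{lemma1}, that all the $S_{\overline{\lambda'},\tau_i}$ coming from the letters of $\lambda$ and of $\lambda^{-1}$ cancel telescopically, so $\eta(\lambda r_\mu\lambda^{-1})$ reduces to the product of the four Schreier generators $S_{\overline{\mu_1},\tau_t}\,S_{\overline{\mu_2},\tau_j}\,S_{\overline{\mu_3},\tau_t}\,S_{\overline{\mu_4},\tau_j}$ corresponding to the four interior letters $\tau_t,\tau_j,\tau_t,\tau_j$, where each $\overline{\mu_\ell}$ is the Schreier representative of the appropriate prefix. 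The whole computation then amounts to identifying each of these four Schreier generators with one of the named generators $\alpha(\cdots),\beta(\cdots)$ (or with $1$), according to the position of $j$, $j+1$, $t$, $t+1$ relative to the index set $\{i_1,\dots,i_k\}$.

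The bookkeeping splits into the same kind of cases as in Lemma \ref{lemma0}: whether or not $t+1\in\{i_1,\dots,i_k\}$, whether or not $j\in\{i_1,\dots,i_k\}$, and whether or not $j+1\in\{i_1,\dots,i_k\}$. When $t+1\notin\{i_1,\dots,i_k\}$ (and likewise the analogous degenerate configurations) the corresponding $S$-symbols are trivial and the relator contributes nothing, so I would dispose of those quickly. In the remaining cases one uses the commutation $\tau_a\tau_b=\tau_b\tau_a$ for $|a-b|>1$ to slide the letters $\tau_t$ past the $\tau_{i_s}$'s with $i_s<t-1$, and to move the inner $\tau_j$'s, thereby converting each of the four interior contributions into the conjugated 4-letter words $\tau_{i_1}\cdots(\tau_t\tau_{t+1}\tau_t\tau_{t+1})\cdots\tau_{i_1}$ or $\tau_{i_1}\cdots(\tau_{t+1}\tau_t\tau_{t+1}\tau_t)\cdots\tau_{i_1}$, i.e.\ into $\alpha$'s and $\beta$'s with the hats $\widehat{j},\widehat{j+1}$ (or $\widehat{j+1}$ alone, together with a $\beta(\dots;j)$ / $\alpha(\dots;j)$ factor) exactly as stated. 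Matching up which of the four displayed identities one lands in is governed precisely by the four sub-configurations: $j\in I$ or $j\notin I$, crossed with whether the representative at that stage has $t$ adjacent to $t+1$ or $t+1$ adjacent to $t$.

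The main obstacle I expect is purely organizational rather than conceptual: keeping track, through four successive prefixes, of the correct Schreier representative $\overline{\mu_\ell}$ when $j,j+1$ may or may not already be present among the $i$'s, and correctly normal-ordering the resulting word so that it is recognizable as one of the $\alpha/\beta$ symbols — in particular verifying that the indices actually come out \emph{consecutive} in the sense required by the Notation (this is what forces the ``$i_1<\dots<i_s$ consecutive, then $j$, then $j+1$, then the rest'' shape appearing in the third and fourth relations). Once the four sub-configurations are carefully enumerated, each line of the lemma drops out by the same cancel-and-slide manipulation already illustrated in Lemmas \ref{lemma0} and \ref{lemma1}, and one checks that these relators, together with those of Lemma \ref{lemma1}, exhaust the output of the Reidemeister--Schreier algorithm, hence form a complete set of defining relations for $TW_n'$.
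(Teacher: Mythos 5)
Your proposal is correct and follows essentially the same route as the paper: apply $\eta$ to the conjugates of $\tau_t\tau_j\tau_t\tau_j$, note the telescoping cancellation of the outer letters so that only the four interior Schreier generators survive, and identify each of these as an $\alpha$, a $\beta$, or $1$ by a case analysis on whether $j$, $j+1$, $t$, $t+1$ lie in $\{i_1,\dots,i_k\}$. The paper organizes the same bookkeeping into the cases $i_k\le j<t$, $j<i_k\le t$ (three subcases) and $j<t<i_k$ (nine subcases), which is exactly the cross of sub-configurations you describe.
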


\begin{proof}
For $ |t-j|>1 $, in $TW_n$ we have the relation: $\tau_t \tau_j \tau_t \tau_j = 1$. We rewrite this relation below.

$ \eta \ (\tau_{i_1} \tau_{i_2} \dots \tau_{i_k} ( \tau_t \tau_j \tau_t \tau_j ) \ \tau_{i_k} \dots \tau_{i_2} \tau_{i_1}) $ \\
$= S_{1, \tau_{i_1}} S_{ \overline{ \tau_{i_1} } , \tau_{i_2}} \dots S_{ \overline{ \tau_{i_1} \tau_{i_2} \dots \tau_{i_k}}  , \tau_t} S_{ \overline{ \tau_{i_1} \tau_{i_2} \dots \tau_{i_k} \tau_t}  , \tau_j} S_{\overline{ \tau_{i_1} \tau_{i_2} \dots \tau_{i_k} \tau_t \tau_j}  , \tau_t} S_{\overline{ \tau_{i_1} \tau_{i_2} \dots \tau_{i_k} \tau_t \tau_j \tau_t } , \tau_j}$\\ $S_{\overline{ \tau_{i_1} \tau_{i_2} \dots \tau_{i_k} \tau_t \tau_j \tau_t \tau_j} , \tau_{i_k} } \dots S_{\overline{ \tau_{i_1} \tau_{i_2} \dots \tau_{i_k} \tau_t \tau_j \tau_t \tau_j \tau_{i_k} \dots \tau_{i_2} } , \tau_{i_1} }$\\
$= S_{ \overline{ \tau_{i_1} \tau_{i_2} \dots \tau_{i_k}}  , \tau_t} S_{ \overline{ \tau_{i_1} \tau_{i_2} \dots \tau_{i_k} \tau_t}  , \tau_j} S_{\overline{ \tau_{i_1} \tau_{i_2} \dots \tau_{i_k} \tau_t \tau_j}  , \tau_t} S_{\overline{ \tau_{i_1} \tau_{i_2} \dots \tau_{i_k} \tau_t \tau_j \tau_t } , \tau_j}.$\\

We need to calculate the above expression in all possible cases in order to get all the remaining defining relations for $TW_n'$.\\

Without loss of generality, we may assume that $j < t$.\\

We can only have the following 3 cases:

\medskip \noindent Case 1: $i_k \le j < t$; \\
Case 2: $j < i_k \le t$;\\
Case 3: $j < t < i_k $.  

\subsection*{Case 1: $i_k \le j < t$}

In this case we have:
$$S_{ \overline{ \tau_{i_1} \tau_{i_2} \dots \tau_{i_k}}  , \tau_t}=1,$$
$$S_{ \overline{ \tau_{i_1} \tau_{i_2} \dots \tau_{i_k} \tau_t}  , \tau_j}=1,$$
$$S_{\overline{ \tau_{i_1} \tau_{i_2} \dots \tau_{i_k} \tau_t \tau_j}  , \tau_t}=1,$$
$$S_{\overline{ \tau_{i_1} \tau_{i_2} \dots \tau_{i_k} \tau_t \tau_j \tau_t } , \tau_j}=1.$$

Hence, this case gives no nontrivial defining relation for $TW_n'$.

\subsection*{Case 2: $j < i_k \le t$}

We further divide this case into 3 subcases.

\subsubsection*{Subcase 2A} $ (j+1) \in \{ i_1, i_2, \dots , i_k \} $ but $ j \notin \{ i_1, i_2, \dots , i_k \} $:  \\

Assume, $(j+1) = i_{s+1}$. Then we have:
$$S_{ \overline{ \tau_{i_1} \tau_{i_2} \dots \tau_{i_k}}  , \tau_t}=1,$$
$$S_{ \overline{ \tau_{i_1} \tau_{i_2} \dots \tau_{i_k} \tau_t}  , \tau_j}= \beta(i_1, \dots, i_s ; j),$$
$$S_{\overline{ \tau_{i_1} \tau_{i_2} \dots \tau_{i_k} \tau_t \tau_j}  , \tau_t}=1,$$
$$S_{\overline{ \tau_{i_1} \tau_{i_2} \dots \tau_{i_k} \tau_t \tau_j \tau_t } , \tau_j}=\alpha(i_1, \dots, i_s ; j).$$

Hence, we get the relations:
$$\beta(i_1, \dots, i_s ; j) ~ \alpha(i_1, \dots, i_s ; j) = 1.$$

\subsubsection*{Subcase 2B} $ j, (j+1) \in \{ i_1, i_2, \dots , i_k \} $: \\ 

Assume, $(j+1) = i_{s+1},~ j = i_s$. Then we have:

$$S_{ \overline{ \tau_{i_1} \tau_{i_2} \dots \tau_{i_k}}  , \tau_t}=1,$$
$$S_{ \overline{ \tau_{i_1} \tau_{i_2} \dots \tau_{i_k} \tau_t}  , \tau_j}=\alpha(i_1, \dots, i_{s-1} ; j),$$
$$S_{\overline{ \tau_{i_1} \tau_{i_2} \dots \tau_{i_k} \tau_t \tau_j}  , \tau_t}=1,$$
$$S_{\overline{ \tau_{i_1} \tau_{i_2} \dots \tau_{i_k} \tau_t \tau_j \tau_t } , \tau_j}=\beta(i_1, \dots, i_{s-1} ; j).$$

So, we get the relations:
$$\alpha(i_1, \dots, i_{s-1} ; j) ~ \beta(i_1, \dots, i_{s-1} ; j) = 1.$$

\subsubsection*{Subcase 2C} $ (j+1) \notin \{ i_1, i_2, \dots , i_k \} $:  In this case we have:
$$S_{ \overline{ \tau_{i_1} \tau_{i_2} \dots \tau_{i_k}}  , \tau_t}=1,$$
$$S_{ \overline{ \tau_{i_1} \tau_{i_2} \dots \tau_{i_k} \tau_t}  , \tau_j}=1,$$
$$S_{\overline{ \tau_{i_1} \tau_{i_2} \dots \tau_{i_k} \tau_t \tau_j}  , \tau_t}=1,$$
$$S_{\overline{ \tau_{i_1} \tau_{i_2} \dots \tau_{i_k} \tau_t \tau_j \tau_t } , \tau_j}=1.$$
So, we do not get any nontrivial relation from this subcase.

\subsection*{Case 3: $j < t < i_k $} We need to divide this case into 9 subcases.

\subsubsection*{Subcase 3A}
$ (j+1) \in \{ i_1, i_2, \dots , i_k \} $, $ j \notin \{ i_1, i_2, \dots , i_k \} $, $ (t+1) \in \{ i_1, i_2, \dots , i_k \} $, $ t \notin \{ i_1, i_2, \dots , i_k \} $: \\

Assume, $(j+1)=i_{s+1}, ~ (t+1)=i_{r+1}$. In this case we have:
$$S_{ \overline{ \tau_{i_1} \tau_{i_2} \dots \tau_{i_k}}  , \tau_t}=\beta(i_1, \dots, \widehat{j}, \dots, i_r ; t),$$
$$S_{ \overline{ \tau_{i_1} \tau_{i_2} \dots \tau_{i_k} \tau_t}  , \tau_j}=\beta(i_1, \dots, i_s ; j),$$
$$S_{\overline{ \tau_{i_1} \tau_{i_2} \dots \tau_{i_k} \tau_t \tau_j}  , \tau_t}=\alpha(i_1, \dots, j, \dots, i_r ; t),$$
$$S_{\overline{ \tau_{i_1} \tau_{i_2} \dots \tau_{i_k} \tau_t \tau_j \tau_t } , \tau_j}=\alpha(i_1, \dots, i_s ; j).$$

($~ \widehat{j} ~$ denotes absence of $j$)\\

Hence, we get the relations:
$$\beta(i_1, \dots, i_s,~ \widehat{j},~ j+1, \dots, i_r ; t) ~ \beta(i_1, \dots, i_s ; j) ~ \alpha(i_1, \dots, i_s,~ j,~ j+1, \dots, i_r ; t) ~ \alpha(i_1, \dots, i_s ; j) = 1.$$

\subsubsection*{Subcase 3B}
$ (j+1) \in \{ i_1, i_2, \dots , i_k \} $, $ j \notin \{ i_1, i_2, \dots , i_k \} $, and $~ t, (t+1) \in \{ i_1, i_2, \dots , i_k \}: $\\

Assume, $(j+1)=i_{s+1}, ~ (t+1)=i_{r+1}, ~ t=i_r$. In this case we have:
$$S_{ \overline{ \tau_{i_1} \tau_{i_2} \dots \tau_{i_k}}  , \tau_t}=\alpha(i_1, \dots, \widehat{j}, \dots, i_{r-1} ; t),$$
$$S_{ \overline{ \tau_{i_1} \tau_{i_2} \dots \tau_{i_k} \tau_t}  , \tau_j}=\beta(i_1, \dots, i_s ; j),$$
$$S_{\overline{ \tau_{i_1} \tau_{i_2} \dots \tau_{i_k} \tau_t \tau_j}  , \tau_t}=\beta(i_1, \dots, j, \dots, i_{r-1} ; t),$$
$$S_{\overline{ \tau_{i_1} \tau_{i_2} \dots \tau_{i_k} \tau_t \tau_j \tau_t } , \tau_j}=\alpha(i_1, \dots, i_s ; j).$$

So, we get the relations:
$$\alpha(i_1, \dots, i_s,~ \widehat{j},~ j+1, \dots, i_{r-1} ; t) ~ \beta(i_1, \dots, i_s ; j) ~ \beta(i_1, \dots, i_s,~ j,~ j+1, \dots, i_{r-1} ; t) ~ \alpha(i_1, \dots, i_s ; j) = 1.$$

\subsubsection*{Subcase 3C}
$ (j+1) \in \{ i_1, i_2, \dots , i_k \} $, $ j \notin \{ i_1, i_2, \dots , i_k \} $, and $~ (t+1) \notin \{ i_1, i_2, \dots , i_k \}: $\\

Assume, $(j+1)=i_{s+1}$. 
In this case we have:
$$S_{ \overline{ \tau_{i_1} \tau_{i_2} \dots \tau_{i_k}}  , \tau_t}=1,$$
$$S_{ \overline{ \tau_{i_1} \tau_{i_2} \dots \tau_{i_k} \tau_t}  , \tau_j}=\beta(i_1, \dots, i_s ; j),$$
$$S_{\overline{ \tau_{i_1} \tau_{i_2} \dots \tau_{i_k} \tau_t \tau_j}  , \tau_t}=1,$$
$$S_{\overline{ \tau_{i_1} \tau_{i_2} \dots \tau_{i_k} \tau_t \tau_j \tau_t } , \tau_j}=\alpha(i_1, \dots, i_s ; j).$$

Hence, we get the relations:
$$\beta(i_1, \dots, i_s ; j) ~ \alpha(i_1, \dots, i_s ; j) = 1.$$

\subsubsection*{Subcase 3D}
$j, (j+1) \in \{ i_1, i_2, \dots , i_k \}$, $ (t+1) \in \{ i_1, i_2, \dots , i_k \} $, $ t \notin \{ i_1, i_2, \dots , i_k \}: $\\

Assume, $(j+1)=i_{s+1}, ~ j=i_s, ~ (t+1)=i_{r+1}$. In this case we have:
$$S_{ \overline{ \tau_{i_1} \tau_{i_2} \dots \tau_{i_k}}  , \tau_t}=\beta(i_1, \dots, j, \dots, i_r ; t),$$
$$S_{ \overline{ \tau_{i_1} \tau_{i_2} \dots \tau_{i_k} \tau_t}  , \tau_j}=\alpha(i_1, \dots, i_{s-1} ; j),$$
$$S_{\overline{ \tau_{i_1} \tau_{i_2} \dots \tau_{i_k} \tau_t \tau_j}  , \tau_t}=\alpha(i_1, \dots, \widehat{j}, \dots, i_r ; t),$$
$$S_{\overline{ \tau_{i_1} \tau_{i_2} \dots \tau_{i_k} \tau_t \tau_j \tau_t } , \tau_j}=\beta(i_1, \dots, i_{s-1} ; j).$$

So, we get the relations:

\medskip \noindent 
$\beta(i_1, \dots, i_{s-1},~ j,~ j+1, \dots, i_r ; t) ~ \alpha(i_1, \dots, i_{s-1} ; j) ~ \alpha(i_1, \dots, i_{s-1},~ \widehat{j},~ j+1, \dots, i_r ; t) ~ \\ \beta(i_1, \dots, i_{s-1} ; j) = 1$.

\subsubsection*{Subcase 3E}
$j, (j+1) \in \{ i_1, i_2, \dots , i_k \}$, and $~ t, (t+1) \in \{ i_1, i_2, \dots , i_k \} $:\\

Assume, $(j+1)=i_{s+1}, ~ j=i_s, ~ (t+1)=i_{r+1}, ~ t=i_r$. In this case we have:
$$S_{ \overline{ \tau_{i_1} \tau_{i_2} \dots \tau_{i_k}}  , \tau_t}=\alpha(i_1, \dots, j, \dots, i_{r-1} ; t),$$
$$S_{ \overline{ \tau_{i_1} \tau_{i_2} \dots \tau_{i_k} \tau_t}  , \tau_j}=\alpha(i_1, \dots, i_{s-1} ; j),$$
$$S_{\overline{ \tau_{i_1} \tau_{i_2} \dots \tau_{i_k} \tau_t \tau_j}  , \tau_t}=\beta(i_1, \dots, \widehat{j}, \dots, i_{r-1} ; t),$$
$$S_{\overline{ \tau_{i_1} \tau_{i_2} \dots \tau_{i_k} \tau_t \tau_j \tau_t } , \tau_j}=\beta(i_1, \dots, i_{s-1} ; j).$$

Hence, we get the relations:

\medskip \noindent   $\alpha(i_1, \dots, i_{s-1},~ j, ~j+1, \dots, i_{r-1} ; t)  ~\alpha(i_1, \dots, i_{s-1} ; j)  ~\beta(i_1, \dots, i_{s-1},~\widehat{j},~j+1, \dots, ~i_{r-1} ; t)~ \\ 
\beta(i_1, \dots, i_{s-1} ; j) = 1$.

\subsubsection*{Subcase 3F}
$j, (j+1) \in \{ i_1, i_2, \dots , i_k \}$, and $~ (t+1) \notin \{ i_1, i_2, \dots , i_k \} $:\\

Assume, $(j+1)=i_{s+1}, ~ j=i_s$.  In this case we have:
$$S_{ \overline{ \tau_{i_1} \tau_{i_2} \dots \tau_{i_k}}  , \tau_t}=1,$$
$$S_{ \overline{ \tau_{i_1} \tau_{i_2} \dots \tau_{i_k} \tau_t}  , \tau_j}=\alpha(i_1, \dots, i_{s-1} ; j),$$
$$S_{\overline{ \tau_{i_1} \tau_{i_2} \dots \tau_{i_k} \tau_t \tau_j}  , \tau_t}=1,$$
$$S_{\overline{ \tau_{i_1} \tau_{i_2} \dots \tau_{i_k} \tau_t \tau_j \tau_t } , \tau_j}=\beta(i_1, \dots, i_{s-1} ; j).$$

So, we get the relations:
$$\alpha(i_1, \dots, i_{s-1} ; j) ~ \beta(i_1, \dots, i_{s-1} ; j) = 1.$$

\subsubsection*{Subcase 3G}
$(j+1) \notin \{ i_1, i_2, \dots , i_k \} $, and $ (t+1) \in \{ i_1, i_2, \dots , i_k \} $, $ t \notin \{ i_1, i_2, \dots , i_k \} $:\\

Assume, $(t+1)=i_{r+1}$.\\

In this case we have:\\

$S_{ \overline{ \tau_{i_1} \tau_{i_2} \dots \tau_{i_k}}  , \tau_t} = \begin{cases}
\beta(i_1, \dots, j, \dots, i_r ; t) & \text{ if $j \in \{ i_1, i_2, \dots , i_k \},$ } \\
\beta(i_1, \dots, \widehat{j}, \dots, i_r ; t) & \text{ if $j \notin \{ i_1, i_2, \dots , i_k \},$ }
\end{cases}$ \\

$S_{ \overline{ \tau_{i_1} \tau_{i_2} \dots \tau_{i_k} \tau_t}  , \tau_j}$ $=1$,

$S_{\overline{ \tau_{i_1} \tau_{i_2} \dots \tau_{i_k} \tau_t \tau_j}  , \tau_t} = \begin{cases}
\alpha(i_1, \dots, \widehat{j}, \dots, i_r ; t) & \text{ if $j \in \{ i_1, i_2, \dots , i_k \},$ } \\
\alpha(i_1, \dots, j, \dots, i_r ; t) & \text{ if $j \notin \{ i_1, i_2, \dots , i_k \},$ }
\end{cases}$ \\

$S_{\overline{ \tau_{i_1} \tau_{i_2} \dots \tau_{i_k} \tau_t \tau_j \tau_t } , \tau_j}$ $=1$.\\

So, we get the relations:
$$\beta(i_1, \dots, j, \dots, i_r ; t) ~ \alpha(i_1, \dots, \widehat{j}, \dots, i_r ; t) = 1,$$
$$\beta(i_1, \dots, \widehat{j}, \dots, i_r ; t) ~ \alpha(i_1, \dots, j, \dots, i_r ; t) = 1.$$

\subsubsection*{Subcase 3H}
$(j+1) \notin \{ i_1, i_2, \dots , i_k \} $, and $~ t, (t+1) \in \{ i_1, i_2, \dots , i_k \} $:\\

Assume, $(t+1)=i_{r+1}, ~ t=i_r$. In this case we have:\\

$S_{ \overline{ \tau_{i_1} \tau_{i_2} \dots \tau_{i_k}}  , \tau_t} = \begin{cases}
\alpha(i_1, \dots, j, \dots, i_{r-1} ; t) & \text{ if $j \in \{ i_1, i_2, \dots , i_k \},$ } \\
\alpha(i_1, \dots, \widehat{j}, \dots, i_{r-1} ; t) & \text{ if $j \notin \{ i_1, i_2, \dots , i_k \},$ }
\end{cases}$ \\

$S_{ \overline{ \tau_{i_1} \tau_{i_2} \dots \tau_{i_k} \tau_t}  , \tau_j}$ $=1$,

$S_{\overline{ \tau_{i_1} \tau_{i_2} \dots \tau_{i_k} \tau_t \tau_j}  , \tau_t} = \begin{cases}
\beta(i_1, \dots, \widehat{j}, \dots, i_{r-1} ; t) & \text{ if $j \in \{ i_1, i_2, \dots , i_k \},$ } \\
\beta(i_1, \dots, j, \dots, i_{r-1} ; t) & \text{ if $j \notin \{ i_1, i_2, \dots , i_k \},$ }
\end{cases}$ \\

$S_{\overline{ \tau_{i_1} \tau_{i_2} \dots \tau_{i_k} \tau_t \tau_j \tau_t } , \tau_j}$ $=1$.\\

Hence, we get the relations:
$$\alpha(i_1, \dots, j, \dots, i_{r-1} ; t) ~ \beta(i_1, \dots, \widehat{j}, \dots, i_{r-1} ; t) = 1,$$
$$\alpha(i_1, \dots, \widehat{j}, \dots, i_{r-1} ; t) ~ \beta(i_1, \dots, j, \dots, i_{r-1} ; t) = 1.$$

\subsubsection*{Subcase 3I}
$(j+1) \notin \{ i_1, i_2, \dots , i_k \} $, and $(t+1) \notin \{ i_1, i_2, \dots , i_k \} $:\\

In this case we have:
$$S_{ \overline{ \tau_{i_1} \tau_{i_2} \dots \tau_{i_k}}  , \tau_t}=1,$$
$$S_{ \overline{ \tau_{i_1} \tau_{i_2} \dots \tau_{i_k} \tau_t}  , \tau_j}=1,$$
$$S_{\overline{ \tau_{i_1} \tau_{i_2} \dots \tau_{i_k} \tau_t \tau_j}  , \tau_t}=1,$$
$$S_{\overline{ \tau_{i_1} \tau_{i_2} \dots \tau_{i_k} \tau_t \tau_j \tau_t } , \tau_j}=1.$$

So, we do not get any nontrivial relation from this subcase.\\

Collecting the relations obtained in all the above cases we have the lemma.
\end{proof}

\section{Finite presentation for $TW_n$: Proof of the theorems}\label{simp}
In this section, we will simplify the presentation for $TW_n'$ that we deduced in the previous section. We will apply Tietze transformations on the current presentation for $TW_n'$ in order to deduce an equivalent presentation for $TW_n'$ with less number of generators and relations than the last one. Refer to \cite{mks} for more details on Tietze transformations. We begin with the following lemma.

\begin{lemma}\label{lemma3}
	For $n \ge 3$, $TW_n'$ has the following presentation:\\
	
	Generators: $~ ~ ~ \beta(j),~ \beta(i_1, i_2, \dots , i_s \ ; \ j)$, for $~ 1 \le i_1 < i_2 < \dots < i_s < j \le n-2$, 
	
\medskip 	Defining relations:
	$$\beta(i_1, \dots, i_s,~ j,~ \widehat{j+1}, \dots, i_r ; t) = \beta(i_1, \dots, i_s,~ \widehat{j},~ \widehat{j+1}, \dots, i_r ; t), $$
	$$\beta(i_1, \dots, i_s,~ j,~ j+1, \dots, i_r ; t) = \beta(i_1, \dots, i_s ; j)^{-1} ~ \beta(i_1, \dots, i_s,~ \widehat{j},~ j+1, \dots, i_r ; t) ~ \beta(i_1, \dots, i_s ; j), $$
	where $ 1 \le i_1 < i_2 < \dots < i_s < j < \dots < i_r < t \le n-2, ~ j \le t-2$.
\end{lemma}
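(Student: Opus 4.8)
The plan is to start from the presentation of $TW_n'$ obtained in Lemmas~\ref{lemma0}, \ref{lemma1} and \ref{lemma2}, whose generators are the symbols $\alpha(j)$, $\beta(j)$, $\alpha(i_1,\dots,i_s;j)$, $\beta(i_1,\dots,i_s;j)$, and to eliminate every generator of $\alpha$-type by a Tietze transformation. By Lemma~\ref{lemma1} we have $\alpha(j)\beta(j)=1$ and $\alpha(i_1,\dots,i_s;j)\beta(i_1,\dots,i_s;j)=1$, so each $\alpha$-generator equals the inverse of the corresponding $\beta$-generator:
\begin{equation*}
\alpha(j)=\beta(j)^{-1},\qquad \alpha(i_1,\dots,i_s;j)=\beta(i_1,\dots,i_s;j)^{-1}.
\end{equation*}
First I would perform the Tietze transformation deleting each $\alpha$-symbol and the defining relation that introduces it, substituting $\beta(\dots)^{-1}$ for $\alpha(\dots)$ throughout the remaining relations. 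This leaves a presentation on the $\beta$-generators $\beta(j)$ and $\beta(i_1,\dots,i_s;j)$ for $1\le i_1<\dots<i_s<j\le n-2$, with relations coming from the four families in Lemma~\ref{lemma2}.

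Next I would simplify those four families after the substitution. The first two relations of Lemma~\ref{lemma2},
\begin{equation*}
\alpha(i_1,\dots,j,\widehat{j+1},\dots,i_r;t)\,\beta(i_1,\dots,\widehat{j},\widehat{j+1},\dots,i_r;t)=1,
\end{equation*}
\begin{equation*}
\beta(i_1,\dots,j,\widehat{j+1},\dots,i_r;t)\,\alpha(i_1,\dots,\widehat{j},\widehat{j+1},\dots,i_r;t)=1,
\end{equation*}
both become, after replacing the $\alpha$ by the inverse $\beta$, the single relation
\begin{equation*}
\beta(i_1,\dots,i_s,j,\widehat{j+1},\dots,i_r;t)=\beta(i_1,\dots,i_s,\widehat{j},\widehat{j+1},\dots,i_r;t),
\end{equation*}
so the two collapse to one; this is the first relation claimed in Lemma~\ref{lemma3}. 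Similarly the third and fourth families,
\begin{equation*}
\beta(\dots\widehat{j},j{+}1\dots;t)\,\beta(\dots;j)\,\alpha(\dots j,j{+}1\dots;t)\,\alpha(\dots;j)=1,
\end{equation*}
\begin{equation*}
\alpha(\dots\widehat{j},j{+}1\dots;t)\,\beta(\dots;j)\,\beta(\dots j,j{+}1\dots;t)\,\alpha(\dots;j)=1,
\end{equation*}
after the substitution $\alpha(\dots)=\beta(\dots)^{-1}$ and taking inverses where needed, both rearrange to
\begin{equation*}
\beta(i_1,\dots,i_s,j,j+1,\dots,i_r;t)=\beta(i_1,\dots,i_s;j)^{-1}\,\beta(i_1,\dots,i_s,\widehat{j},j+1,\dots,i_r;t)\,\beta(i_1,\dots,i_s;j),
\end{equation*}
which is the second claimed relation. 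I would also check the subcases that in Lemma~\ref{lemma2}'s proof produced relations like $\beta(i_1,\dots,i_s;j)\alpha(i_1,\dots,i_s;j)=1$ or $\alpha(i_1,\dots,i_{s-1};j)\beta(i_1,\dots,i_{s-1};j)=1$: after the substitution these are trivially satisfied and can be discarded, so they contribute nothing new.

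The main obstacle I anticipate is bookkeeping: one must verify that after substitution every relation of Lemma~\ref{lemma2} either becomes trivial or falls into exactly one of the two stated families, with the index ranges matching (in particular that the condition $j\le t-2$ is exactly what is needed, and that the ``consecutive integers'' constraint from Lemma~\ref{lemma0} is automatically respected by the surviving generators). I would organize this by going through Subcases 3A--3I of Lemma~\ref{lemma2} one at a time, noting that 3A and 3D/3E give the ``consecutive'' family (second relation), 3B and its mirror give the ``one gap'' family (first relation), and 3C, 3F, 3G, 3H, 3I give only relations that trivialize. A minor care point is orientation of the rewritten words: some of the Lemma~\ref{lemma2} relations have the $\alpha$/$\beta$ factors in an order that, after inversion, requires conjugating the whole relation or cyclically permuting it to reach the displayed normal form; since a relation and any cyclic permutation or inverse of it define the same normal closure, this is legitimate as a Tietze move. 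Once all nine subcases are accounted for, the presentation on the $\beta$-generators with exactly the two displayed relation families is established, proving the lemma.
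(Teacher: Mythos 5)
Your proposal is correct and follows essentially the same route as the paper: use the relations $\alpha(\dots)\beta(\dots)=1$ from Lemma~\ref{lemma1} to eliminate all $\alpha$-generators by Tietze transformations, substitute $\beta(\dots)^{-1}$ throughout the relations of Lemma~\ref{lemma2}, and observe that the four resulting families collapse to the two displayed ones. Your version is in fact more detailed than the paper's two-line proof, and the extra bookkeeping you carry out (checking that paired relations coincide after substitution) is accurate.
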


\begin{proof}
	From \lemref{lemma1}, we have $\alpha(j)=\beta(j)^{-1}$, $\alpha(i_1, i_2, \dots , i_s \ ; \ j)=\beta(i_1, i_2, \dots , i_s \ ; \ j)^{-1}$.\\
	Hence, we replace $\alpha(j)$ by $\beta(j)^{-1}$ and $\alpha(i_1, i_2, \dots , i_s \ ; \ j)$ by $\beta(i_1, i_2, \dots , i_s \ ; \ j)^{-1}$ in all other defining relations for $TW_n'$, and remove all $\alpha(j),~ \alpha(i_1, i_2, \dots , i_s \ ; \ j)$ from the set of generators. This completes the proof of \lemref{lemma3}.
\end{proof}

\subsection{Observation}

Note that, we have the defining relations
$$\beta(i_1, \dots, i_s,~ j,~ \widehat{j+1}, \dots, i_r ; t) = \beta(i_1, \dots, i_s,~ \widehat{j},~ \widehat{j+1}, \dots, i_r ; t).$$ 

Note that here we have $j \le t-2$. Let us look at the following example.\\

Consider the generator $\beta(3,4,6,7,9,10,11;12)$ in $TW_{15}'$. From the above set of relations, as `5' does not appear in $\beta(3,4,6,7,9,10,11;12)$, we can conclude that $\beta(3,4,6,7,9,10,11;12) = \beta(3,6,7,9,10,11;12)$.
As `4' is missing in $\beta(3,6,7,9,10,11;12)$, we get $\beta(3,6,7,9,10,11;12) = \beta(6,7,9,10;11)$.
We can go further. Using the same relations we get $\beta(6,7,9,10;11) = \beta(6,9,10;11) = \beta(9,10;11)$.\\

From the above observation it is clear that using the above defining relations finitely many times, any generator $\beta(i_1, i_2, \dots, i_s; j)$ can be shown to be equal to a generator of the form $\beta(j-p, j-p+1, \dots, j-1 \ ; \ j)$ for some $p < j$, or be equal to $\beta(j)$. Let's call these the \textit{normal forms} of the generators.\\

\subsection{Notation: }
We will follow the notations for the normal forms as below:
$$ \text{For } 1 \le p < j, \ \ \beta_{p}(j) := \beta(j-p, \dots, j-1 ; j), \ \ \text{ and }\ \ \ \beta_{0}(j) := \beta(j). $$

As every generator is equal to its normal form, we replace all the generators with their normal forms in all the defining relations and remove all the generators except the normal forms from the generating set. For clarity of exposition, we define the following.

\begin{definition}
	For a generator $~ \beta(i_1, i_2, \dots, i_s ; j)~$ we define the \it{ highest missing entry in $\beta(i_1, i_2, \dots, i_s ; j)$ } to be $k$ for some $~i_1-1 \le k \le j-1~$ if $k \notin \{ i_1, i_2, \dots, i_s, j \} $ but for any $m$ with $k < m \le j$, $m \in \{ i_1, i_2, \dots, i_s, j \}$.
\end{definition}
\subsection{Proof of \thmref{mainth}}

\begin{proof}
	 We have the following relations in the presentation for $TW_n'$, $n \geq 3$,  as in \lemref{lemma3}:
	 $$\beta(i_1, \dots, i_s,~ j,~ j+1, \dots, i_r ; t) = \beta(i_1, \dots, i_s ; j)^{-1} ~ \beta(i_1, \dots, i_s,~ \widehat{j},~ j+1, \dots, i_r ; t) ~ \beta(i_1, \dots, i_s ; j),$$
	 where $ 1 \le i_1 < i_2 < \dots < i_s < j < \dots < i_r < t \le n-2, ~ j \le t-2$.\\
	 
	 We replace the generators appearing in these relations by their normal forms $\beta_{p}(j)$'s. Our goal is to find the modified relations after the substitution.

 Consider the left hand side of the above relations. We have $\beta(i_1, \dots, i_s,~ j,~ j+1, \dots, i_r ; t)$. Note that the highest missing entry in $\beta(i_1, \dots, i_s,~ j,~ j+1, \dots, i_r ; t)$ cannot be $j$ or $j+1$, as both are present as entries. So we can have 2 possibilities. We examine the 2 cases separately below.\\
 
 $\textbf{Case 1:}$ The highest missing entry in $\beta(i_1, \dots, i_s,~ j,~ j+1, \dots, i_r ; t)$ is greater than $j+1$.\\
 
Suppose the highest missing entry in $\beta(i_1, \dots, i_s,~ j,~ j+1, \dots, i_r ; t)$ is $j+(l-1)$ for some $l \ge 3$. Also suppose the highest missing entry in $\beta(i_1, \dots, i_s; j)$ is $m-1$ for some $1 \le m \le j$.\\
 
Then, the relations are equivalent to the following relations:\\
$\text{ for all } l \ge 3,~ 1 \le m \le j,~ j \le t-2,$
 \begin{equation*}
	\beta(j+l, \dots, t-1;t) = \beta(m, \dots, j-1;j)^{-1} \beta(j+l, \dots, t-1;t) \beta(m, \dots, j-1;j).
 \end{equation*}
 
 So, after the substitution by normal forms the relations become:
 \begin{equation*}
	 \beta_{t-(j+l)}(t) = \beta_{j-m}(j)^{-1} ~ \beta_{t-(j+l)}(t) ~ \beta_{j-m}(j), ~ ~ \text{ for all } l \ge 3,~ 1 \le m \le j,~ j \le t-2.
 \end{equation*}
 
Equivalently,
\begin{equation*}
\beta_{j-m}(j) ~ \beta_{t-(j+l)}(t) =  \beta_{t-(j+l)}(t) ~ \beta_{j-m}(j), ~ ~ \text{ for all } l \ge 3,~ 1 \le m \le j,~ j \le t-2.
\end{equation*}

$\textbf{Case 2:}$ The highest missing entry in $\beta(i_1, \dots, i_s,~ j,~ j+1, \dots, i_r ; t)$ is less than $j$.\\

Suppose the highest missing entry in $\beta(i_1, \dots, i_s,~ j,~ j+1, \dots, i_r ; t)$ is $m-1$ for some $1 \le m \le j$. Then clearly the highest missing entry in $\beta(i_1, \dots, i_s; j)$ is also $m-1$.\\

So, after the substitution by normal forms the relations become:
\begin{equation*}
\beta_{t-m}(t) = \beta_{j-m}(j)^{-1} ~ \beta_{t-(j+1)}(t) ~ \beta_{j-m}(j), ~ ~ \text{ for all } 1 \le m \le j,~ j \le t-2.
\end{equation*}
	 
	This proves the theorem. 
\end{proof}

\subsection{Further elimination:} We shall further reduce the number of generators in the presentation by removing all  $\beta_p(j)$ with $p > 1$ by using the defining relations:
\begin{equation*}
\beta_{t-m}(t) = \beta_{j-m}(j)^{-1} ~ \beta_{t-(j+1)}(t) ~ \beta_{j-m}(j),
\end{equation*}
for all $~ m,~ j,~ t \in \{ 1,\dots,n-2 \}$ with $~ 1 \le m \le j \le t-2.$\\

Note that, if we consider the cases where $j=m$ in the above set of relations, we obtain the following set of relations:
\begin{equation*}
\beta_{t-m}(t) = \beta_{0}(m)^{-1} ~ \beta_{t-(m+1)}(t) ~ \beta_{0}(m),
\end{equation*}
for all $~ m,~ t \in \{ 1,\dots,n-2 \} ~$ with $~ 1 \le m \le t-2.$\\

So, if $~ t-m \ge 2, ~$ we can express $~ \beta_{t-m}(t) ~$ as the conjugate of $~ \beta_{t-(m+1)}(t) ~$ by $~ \beta_{0}(m)$. We do this iteratively to express $~ \beta_{t-m}(t) ~$ as the conjugate of $\beta_1(t)$ by the element $\beta_0(t-2) \dots \beta_0(m)$ and thus remove all $\beta_p(j)$ with $p \ge 2$ from the set of generators after replacing them with the above values in all the remaining relations.

\begin{lemma}\label{lemma5}
	For $n \ge 3$, $TW_n'$ has a finite presentation with $(2n-5) $ generators.
\end{lemma}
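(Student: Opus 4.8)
The plan is to start from the presentation of $TW_n'$ produced in \thmref{mainth} (with $m = n-2$), whose generating set is $\{\beta_p(j) : 0 \le p < j \le n-2\}$, and to remove from it, one at a time by Tietze transformations, every generator $\beta_p(j)$ with $p \ge 2$. What remains is $\{\beta_0(j) : 1 \le j \le n-2\} \cup \{\beta_1(j) : 2 \le j \le n-2\}$ --- the constraint $j \ge 2$ for the $\beta_1(j)$'s comes from $p < j$ --- and counting these two families gives $(n-2) + (n-3) = 2n-5$ generators. Since only finitely many generators and relations are ever deleted, and substitution preserves finiteness of the relation set, the outcome is a finite presentation.

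The eliminations are driven by the subfamily of defining relations of \thmref{mainth} in which the free index coincides with the second argument; as recorded in the discussion preceding this lemma, these read
\[
\beta_{t-m}(t) = \beta_0(m)^{-1}\,\beta_{t-(m+1)}(t)\,\beta_0(m), \qquad 1 \le m \le t-2 .
\]
Writing $p = t - m$, this says $\beta_p(t) = \beta_0(t-p)^{-1}\,\beta_{p-1}(t)\,\beta_0(t-p)$ for $2 \le p \le t-1$: it expresses $\beta_p(t)$ as a word in $\beta_0(t-p)$ and $\beta_{p-1}(t)$ not involving $\beta_p(t)$ itself.

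For each fixed $t \in \{3, \dots, n-2\}$ I would perform the removals in decreasing order of $p$, from $p = t-1$ down to $p = 2$. At the step removing $\beta_p(t)$ the relation invoked is $\beta_p(t) = \beta_0(t-p)^{-1}\beta_{p-1}(t)\beta_0(t-p)$, whose right-hand side involves only $\beta_0(t-p)$ (never deleted) and $\beta_{p-1}(t)$ (still present, since it would only be deleted at a later step, or not at all when $p-1 = 1$); hence this is a legitimate Tietze move, deleting one generator and one relation and substituting the word for $\beta_p(t)$ throughout the surviving relations. The defining relations for distinct values of $t$ do not interfere, as each involves only the $\beta_0$'s and generators with second argument $t$. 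Iterating the substitutions replaces every $\beta_p(t)$ with $p \ge 2$ by $w_{t,p}^{-1}\,\beta_1(t)\,w_{t,p}$, where $w_{t,p} = \beta_0(t-2)\beta_0(t-3)\cdots\beta_0(t-p)$ is a word in the surviving $\beta_0$'s.

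Once all such $\beta_p(j)$, $p \ge 2$, have been removed, the presentation has generating set $\{\beta_0(j) : 1 \le j \le n-2\} \cup \{\beta_1(j) : 2 \le j \le n-2\}$, of cardinality $2n-5$, together with the finitely many relations of \thmref{mainth} rewritten under the above substitutions; this proves the lemma. I do not expect a genuine obstacle: the only points needing care are choosing the elimination order so that the relation invoked at each step has the form ``generator $=$ word in the not-yet-deleted generators'' required for a valid Tietze transformation, and the bookkeeping of exactly which $\beta_1(j)$ survive. Note that this lemma asserts only the \emph{existence} of a finite presentation on $2n-5$ generators; the stronger statement that $2n-5$ is the \emph{rank} of $TW_n'$ is \thmref{thmrank}, established separately.
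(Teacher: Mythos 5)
Your proposal is correct and follows essentially the same route as the paper: it isolates the $k=j$ (equivalently $m=j$) relations $\beta_{t-m}(t)=\beta_0(m)^{-1}\beta_{t-(m+1)}(t)\beta_0(m)$, uses them to eliminate every $\beta_p(j)$ with $p\ge 2$ by Tietze transformations, and counts the survivors $\beta_0(j)$ ($1\le j\le n-2$) and $\beta_1(j)$ ($2\le j\le n-2$) to get $2n-5$. Your explicit check that the decreasing-$p$ elimination order keeps each substituting word free of already-deleted generators is a slightly more careful version of the paper's iterative argument.
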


\begin{proof}
	After performing the above substitution we are left with $\beta_p(j)$ with $p \le 1$ and $1 \le j \le n-2.$ Hence, corresponding to every $~ 2 \le j \le n-2 ~$ we have 2 generators $\beta_0(j)$ and $\beta_1(j)$. For $j=1$, we have only 1 generator, namely $\beta_0(1)$. So, we have total $~ 2 \times (n-3) + 1 = 2n-5 ~$ generators in the final presentation for $TW_n'$ for $n \ge 3.$
	
	Note that the presentation given in \thmref{mainth} has finitely many defining relations. As finitely many  $\beta_p(j)$ are being replaced and each $\beta_p(j)$ appears finitely many times in all the defining relations, after the above substitution we will have finitely many defining relations in the final presentation. This proves the lemma.
\end{proof}

\subsection*{Proof of \thmref{thmrank}}

	We consider the abelianization of $TW_n'$ for $n \ge 3,$ $~(TW_n')^{ab} = TW_n'/TW_n''$. In order to find a presentation for $(TW_n')^{ab}$ we insert all possible commuting relations $\beta_p(j)~ \beta_q(i) = \beta_q(i)~ \beta_p(j),~$ for all $i,j \in \{1, \dots, n-2\},~ 0 \le p < j,~ 0 \le q < i, ~$ in the presentation for $TW_n'$. This gives the following presentation for $(TW_n')^{ab}$:\\
	
	Generators: $ \ \ \ \ \beta_{p}(j), \ \ \ \ \  0 \le p < j \le n-2. $\\
	
	Defining relations: $\beta_p(j)~ \beta_q(i) = \beta_q(i)~ \beta_p(j),~~ \forall i,j \in \{1, \dots, n-2\}, $
	\begin{equation*}
	\beta_{t-m}(t) =  \beta_{t-(j+1)}(t),~~ 1 \le m \le j,~~ j+2 \le t \le n-2.
	\end{equation*}
	
	Iterating the last set of relations, we deduce that $\beta_p(j) = \beta_1(j)$ for all $p \ge 2$ and for all $j \ge 3$. Hence, we remove all $\beta_p(j)$ with $p \ge 2$ from the set of generators by replacing them with $\beta_1(j)$. After this replacement we get the following presentation for $(TW_n')^{ab}$:\\
	
	Generators: $~ \beta_0(1),~ \beta_0(j),~ \beta_1(j),~ 2 \le j \le n-2.$\\
	
	Defining relations: $\beta_p(j)~ \beta_q(i) = \beta_q(i)~ \beta_p(j),~~ \forall i,j \in \{1, \dots, n-2\},~~ p, q \in \{0,1\}.$\\
	
	Clearly, this is the presentation for direct sum of $(2n-5)$ copies of $\Z,~~$ i.e. $\Z^{2n-5}$.
	So, $~ (TW_n')^{ab} ~$ is isomorphic to $~ \Z^{2n-5}$. Hence, rank of $~ (TW_n')^{ab} ~$ is $~ (2n-5)$.\\
	
	As, $~ (TW_n')^{ab} ~$ is the homomorphic image of $~TW_n'~$ under the quotient homomorphism $~TW_n' \longrightarrow (TW_n')^{ab},~$ rank of $~ (TW_n')^{ab} ~$ is less than or equal to the rank of $~TW_n'.~$ Thus, rank($~TW_n'~$) $\ge$ rank($~ (TW_n')^{ab} ~$) $= 2n-5.$ From \lemref{lemma5} we get rank($~TW_n'~$) $\le 2n-5$. So, we conclude that rank($~TW_n'~$) $= 2n-5$.

\subsection*{Proof of \corref{cor1}:}
In the proof of \thmref{thmrank} we observed that for $n \ge 3$, $TW_n'/TW_n''$ is isomorphic to direct sum of $(2n-5)$ copies of $\Z$. So, we conclude that $TW_n' \ne TW_n''$, hence $TW_n'$ is not perfect for any $n \ge 3.$\\

For $n \le 5$, $TW_n'$ are well known groups. We have the following.

\begin{prop}\label{prop7}
	We have the following:\\
	(i) $TW_2'$ is the identity group $\{ 1 \}$.\\
	(ii) $TW_3'$ is the infinite cyclic group $\Z$.\\
	(iii) $TW_4'$ and $TW_5'$ are free groups of rank $3$ and $5$, respectively. 
\end{prop}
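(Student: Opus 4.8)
The plan is to verify each case by hand using the finite presentation from \thmref{mainth}, specializing to the relevant small value of $m = n-2$. For part (i), $TW_2$ is cyclic of order $2$, hence abelian, so $TW_2' = \{1\}$ trivially. For part (ii), taking $m=1$, the generating set $\{\beta_p(j) \mid 0 \le p < j \le 1\}$ consists of the single generator $\beta_0(1)$, and the defining relations of \thmref{mainth} require indices $j+2 \le t \le m = 1$, which is vacuous; hence $TW_3'$ is free on one generator, i.e. infinite cyclic. This already also matches \thmref{thmrank} which gives rank $2m-1 = 1$.

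For part (iii), I would treat $m=2$ and $m=3$ separately. When $m=2$, the generators are $\beta_0(1), \beta_0(2), \beta_1(2)$ (three of them, matching rank $2m-1 = 3$), and again the defining relations need $j+2 \le t \le 2$, which forces $j \le 0$, impossible; so there are no relations and $TW_4'$ is free of rank $3$. When $m=3$, the generators are $\beta_0(1), \beta_0(2), \beta_1(2), \beta_0(3), \beta_1(3), \beta_2(3)$, six in total, and now the relations can be nonempty: we need $j+2 \le t \le 3$, forcing $t=3$, $j=1$, $k=1$, $l \ge 3$ but also $t-(j+l) = 3 - (1+l) \ge 0$ requires $l \le 2$, so the commuting relations are vacuous; the conjugation relations give $\beta_{3-1}(3) = \beta_0(1)^{-1}\beta_{3-2}(3)\beta_0(1)$, i.e. $\beta_2(3) = \beta_0(1)^{-1}\beta_1(3)\beta_0(1)$. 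Using this single relation as a Tietze transformation to eliminate $\beta_2(3)$, we are left with a presentation on the five generators $\beta_0(1), \beta_0(2), \beta_1(2), \beta_0(3), \beta_1(3)$ with no relations, so $TW_5'$ is free of rank $5$. Alternatively, and more cleanly, one invokes \lemref{lemma5} (which gives a presentation with $2n-5$ generators) together with \thmref{thmrank} (rank exactly $2n-5$): a finitely presented group whose minimal number of generators equals the number of generators in some presentation having, after the eliminations described in the ``Further elimination'' subsection, no surviving relations, must be free of that rank. For $n \le 5$ one checks directly that all defining relations are consumed by the eliminations.

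The main obstacle is purely bookkeeping: one must carefully track which index ranges in the relations of \thmref{mainth} are actually nonempty for $m \in \{1,2,3\}$, and confirm that after the generator-elimination procedure of \secref{simp} no relation survives. For $m=3$ this requires checking that the constraint $t-(j+l)\ge 0$ with $l \ge 3$ and $t \le 3$ leaves only the conjugation-type relations, each of which can be used to eliminate one generator $\beta_p(j)$ with $p \ge 2$ without introducing new relations among the remaining generators; this is exactly the iterative elimination already performed in the ``Further elimination'' subsection, specialized to small $n$. Once that is verified, freeness is immediate and the ranks $1, 3, 5$ follow from \thmref{thmrank}. (In fact \corref{corfree}, stated earlier, records that $TW_{m+2}'$ is free precisely for $m \le 3$, so \propref{prop7} is the $m \le 3$ half of that dichotomy made explicit.)
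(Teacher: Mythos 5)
Your proposal is correct and follows essentially the same route as the paper: specialize the presentation of \thmref{mainth} to $m=1,2,3$, observe that the index constraints leave no relations for $m\le 2$ and only the single relation $\beta_2(3)=\beta_0(1)^{-1}\beta_1(3)\beta_0(1)$ for $m=3$, and eliminate $\beta_2(3)$ by a Tietze transformation to exhibit $TW_5'$ as free of rank $5$. Your index bookkeeping (in particular that $t-(j+l)<0$ kills the commuting relations when $m=3$) matches the paper's computation exactly.
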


\begin{proof}
	Note that, $TW_2 ~ = ~ \langle ~ \tau_1 ~|~ \tau_1^2=1 ~ \rangle ~ = ~ \Z/2\Z ~$ and $~ \Z/2\Z ~$ is an abelian group.
	Hence, $TW_2'$ is the identity group.
	
	From \thmref{mainth} it follows that $~ TW_3' ~ = ~ \langle ~ \beta_0(1) ~ \rangle ~$ which is isomorphic to the infinite cyclic group $\Z$.
	
	From \thmref{mainth} it follows that $~ TW_4' ~ = ~ \langle ~ \beta_0(1),~ \beta_0(2),~ \beta_1(2) ~ \rangle ~$ which is the free group of rank 3.
	
	From \thmref{mainth} it follows that: $$~ TW_5' ~ = ~ \langle ~ \beta_0(1),~ \beta_0(2),~ \beta_1(2),~ \beta_0(3),~ \beta_1(3),~ \beta_2(3) ~ | ~ \beta_2(3) ~ = ~ \beta_0(1)^{-1} ~ \beta_1(3) ~ \beta_0(1) ~ \rangle ~$$
	$$= ~ \langle ~ \beta_0(1),~ \beta_0(2),~ \beta_1(2),~ \beta_0(3),~ \beta_1(3)~ \rangle .~  \ \ \ \ \ \ \ \ \ \ \ \ \ \ \ \ \ \ \ \ \ \ \ \ \ \ \ \ \ \ \ \ \ \ \ \ \ \ \ \ \ $$
	Hence, $TW_5'$ is free of rank 5.
    This completes the proof of \propref{prop7}.
\end{proof}

From \cite{pv} we have a necessary and sufficient condition for the commutator subgroup of a right-angled Coxeter group to be free. Since $TW_n$ is a right-angled Coxeter group, we check the condition for $TW_n$. We note the following definitions.

\begin{definition}
	A graph $\Gamma$ is called \textit{ chordal } if for every cycle in $\Gamma$ with atleast 4 vertices there is an edge (called chord) in $\Gamma$ joining 2 non-adjacent vertices of the cycle.
\end{definition}

\begin{definition}
	The \textit{Coxeter graph} $~ \Gamma_{TW_n} ~$ corresponding to $~ TW_n ~$ is defined as follows. Corresponding to each generator $\tau_i$ of $TW_n$ we have a vertex $v_i$ in $~ \Gamma_{TW_n}. ~$ Corresponding to each commuting defining relation $\tau_i \tau_j = \tau_j \tau_i,~ |i-j|>1,~$ we have an edge in $~ \Gamma_{TW_n} ~$ joining $v_i$ and $v_j$.
\end{definition}

We have the following proposition.

\begin{prop}\label{prop8}
	For $n \ge 6$, $TW_n'$ is not a free group.
\end{prop}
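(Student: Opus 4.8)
\textbf{Proof proposal for Proposition \ref{prop8}.}

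The plan is to invoke the Panov--Ver\"evkin criterion \cite{pv} characterizing when the commutator subgroup of a right-angled Coxeter group is free, translated into a condition on the defining graph. Since $TW_n$ is the right-angled Coxeter group whose Coxeter graph $\Gamma_{TW_n}$ is the complement of the path $P_{n-1}$ on the vertices $v_1,\dots,v_{n-1}$ (there is an edge $v_iv_j$ precisely when $|i-j|>1$), the freeness of $TW_n'$ is equivalent to $\Gamma_{TW_n}$ being chordal (this is the content of the relevant theorem in \cite{pv}: for a RACG, the commutator subgroup is free iff the nerve/defining graph is chordal). So the whole proposition reduces to exhibiting, for every $n \ge 6$, an induced cycle of length $\ge 4$ with no chord in $\Gamma_{TW_n}$.

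First I would make the graph-theoretic reduction explicit and then produce the obstructing cycle. For $n = 6$ the vertices are $v_1,\dots,v_5$, and in $\Gamma_{TW_6}$ the edges are exactly the pairs at distance $>1$ on the path, namely $v_1v_3,\ v_1v_4,\ v_1v_5,\ v_2v_4,\ v_2v_5,\ v_3v_5$. I claim the $4$-cycle $v_1 - v_3 - v_5 - v_2 - v_1$ is induced and chordless: its edges $v_1v_3$, $v_3v_5$, $v_5v_2$, $v_2v_1$ — wait, $v_1v_2$ is \emph{not} an edge since $|1-2|=1$, so I must choose more carefully. The correct choice is the $4$-cycle on $\{v_1,v_3,v_2,v_4\}$ in the order $v_1 - v_3$ — again $v_3v_2$ fails. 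Let me instead take the induced subgraph on $\{v_1, v_4, v_2, v_5\}$: edges present are $v_1v_4$, $v_1v_5$, $v_2v_4$, $v_2v_5$, while $v_1v_2$ and $v_4v_5$ are absent (consecutive indices); this is exactly a $4$-cycle $v_1 - v_4 - v_2 - v_5 - v_1$ with no chord. Hence $\Gamma_{TW_6}$ is not chordal. For general $n \ge 6$, the same four vertices $\{v_1,v_4,v_2,v_5\}$ sit inside $\Gamma_{TW_n}$ as an induced chordless $4$-cycle (adjacency among these four indices depends only on their mutual differences, which are unchanged), so $\Gamma_{TW_n}$ is non-chordal for all $n \ge 6$.

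Finally I would assemble the conclusion: by the Panov--Ver\"evkin characterization, non-chordality of $\Gamma_{TW_n}$ forces $TW_n'$ to be non-free, which is the proposition. Alternatively — and this is a worthwhile cross-check rather than the main line of argument — one can read non-freeness directly off the presentation in \thmref{mainth}: for $n \ge 6$ (i.e. $m = n-2 \ge 4$) the generators $\beta_0(1)$ and $\beta_1(3)$ satisfy, via the relation $\beta_{3-1}(3) = \beta_0(1)^{-1}\beta_1(3)\beta_0(1)$ together with a commuting relation of the first type, a nontrivial relation showing $TW_n'$ contains a $\Z \oplus \Z$, hence cannot be free; the introduction already notes this $\Z^2$-subgroup phenomenon for $m \ge 4$. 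The main obstacle is pinning down the precise statement of the \cite{pv} criterion in the chordal-graph form and verifying that our $\Gamma_{TW_n}$ is indeed its defining graph (nerve), after which the proof is just the one-line exhibition of the induced $4$-cycle above.
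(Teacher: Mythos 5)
Your proof is correct and is essentially identical to the paper's: both invoke the Panov--Ver\"evkin criterion that the commutator subgroup of a right-angled Coxeter group is free iff its Coxeter graph is chordal, and both exhibit the same induced chordless $4$-cycle $v_1 - v_4 - v_2 - v_5 - v_1$ in $\Gamma_{TW_n}$ for $n \ge 6$ (the chords $v_1v_2$ and $v_4v_5$ are absent because consecutive generators do not commute).
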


\begin{proof}

As proved in \cite{pv}, for a right-angled Coxeter group $G$, the commutator subgroup $G'$ is free group if and only if the Coxeter graph of $G$, $~ \Gamma_{G} ~$ is chordal.

\begin{figure}[ht!]
	\centering
	\includegraphics[width=35mm]{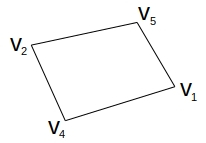}
	\caption{Cycle with 4 vertices but no chord in $~ \Gamma_{TW_n}, ~$ $n \ge 6$ \label{chordal}}
\end{figure}

Consider the Coxeter graph $~ \Gamma_{TW_n} $ corresponding to $~ TW_n. ~$ Note that for $n \ge 6$, $~ \Gamma_{TW_n} ~$ contains the cycle $ v_1 v_4 v_2 v_5 v_1 $ joining the vertices $~ v_1, v_4, v_2, v_5 ~$  (as in the figure above). Clearly this cycle does not have any chord; as $\tau_1, \tau_2$ do not commute and $\tau_4, \tau_5$ do not commute. This shows that for $~ n \ge 6 ~$ $\Gamma_{TW_n} ~$ is not chordal.

Consequently, $~ TW_n' ~$ is not free for $ n \ge 6$, proving \propref{prop8}.
\end{proof}

\subsection*{Proof of \corref{corfree}} 

\corref{corfree} follows from \propref{prop7}.  
and \propref{prop8}.

\subsection*{Presentation for $TW_6'$} As follows from the above, $TW_6'$ is the first non-free group in the family of $TW_n'$, $n \geq 3$.  Here, we note down a presentation for $TW_6'$ with minimal number of generators:\\

Generators: $\beta_0(1),~ \beta_0(2),~ \beta_1(2),~ \beta_0(3),~ \beta_1(3),~ \beta_0(4),~ \beta_1(4).$\\

Defining relations:
$$ \beta_0(1) ~ \beta_0(4) ~ = ~ \beta_0(4) ~ \beta_0(1), $$
$$ \beta_1(2)^{-1} ~ \beta_1(4) ~ \beta_1(2) ~ = ~ \beta_0(1)^{-1} ~ \beta_0(2)^{-1} ~ \beta_1(4) ~ \beta_0(2) ~ \beta_0(1). $$

\bigskip

\bigskip

\begin{ack} Thanks to Andrei Vesnin,  Matt Zaremsky and Pranab Sardar for comments on this work. 

The work was initiated when Soumya Dey was visiting the Sobolev Institute of Mathematics, Novosibirsk, Russia during July 2017, and he is indebted to Mahender Singh for facilitating the visit by DST grant INT/RUS/RSF/P-2. Dey is grateful to Andrei Vesnin for introducing him to the twin groups and suggesting the problem.  Dey acknowledges initial discussions with Valeriy Bardakov on $TW_3'$ and $TW_4'$. This problem was a part of the Indo-Russian collaboration, supported by the above DST grant and the grant RSF-16-41-02006.

  This research was also supported in part by the International Centre for Theoretical Sciences (ICTS) during a visit for participating in the program - Geometry, Groups and Dynamics (Code: ICTS/ggd2017/11).   Thanks to ICTS for the hospitality during the work. 

\end{ack}


\end{document}